\newcommand{\mathd}{\mathrm{d}}
\newcommand{\tmop}[1]{\ensuremath{\operatorname{#1}}}
\theoremstyle{plain}
\newtheorem{definition}{Definition}
\theoremstyle{plain}
\newtheorem{lemma}{Lemma}
\theoremstyle{remark}
\newtheorem{remark}{Remark}
\theoremstyle{plain}
\newtheorem{theorem}{Theorem}
\begin{document}

\title{Two quasi-local masses evaluated on surfaces with boundary}

\author{Xiaoxiang Chai}

\email{ChaiXiaoxiang@gmail.com}

\date{Nov. 15, 2018}

\begin{abstract}
  We study Hawking mass and the Huisken's isoperimetric mass evaluated on
  surfaces with boundary. The convergence to an ADM mass defined on
  asymptotically flat manifold with a non-compact boundary are proved.
\end{abstract}

{\maketitle}

\section{Introduction}

{\itshape{ADM mass}} defined in {\cite{arnowitt-canonical-1960}} has deep
connections with minimal surface theory and the geometry of scalar curvatures
as revealed by the seminal works {\cite{schoen1979,schoen-variational-1989}}.
Their theorems states that for an asymptotically flat manifold with
appropriate decay rate of the metric, if the manifold is of nonnegative scalar
curvature, then the ADM mass is nonnegative. This is extended to an
asymptotically flat manifold with a noncompact boundary by Almaraz, Barbosa
and de Lima {\cite{almaraz-positive-2016}}. First, we recall their definition
of an asymptotically flat manifold with a noncompact boundary,

\begin{definition}
  (Asymptotically flat with a noncompact boundary,
  {\cite{almaraz-positive-2016}}) We say that $(M, g)$ is asymptotically flat
  with decay rate $\tau > 0$ if there exists a compact subset $K \subset M$
  and a diffeomorphism $\Psi : M\backslash K \rightarrow \mathbb{R}_+^n
  \backslash \bar{B}_1^+ (0)$ such that the following asymptotics holds as $r
  \rightarrow + \infty$:
  \[ |g_{i j} (x) - \delta_{i j} | + r |g_{i j, k} | + r^2 |g_{i j, k l} | = o
     (r^{- \tau}) \]
  where $\tau > \frac{n - 2}{2}$.
\end{definition}

Here, $x = (x_1, \cdots, x_n)$ is the coordinate system induced by the
diffeomorphism $\Psi$, $r = |x|$, $g_{i j}$ are the components of $g$ with
respect to $x$, the comma denotes partial differentiation. We identify
$\mathbb{R}_+^n = \{x \in \mathbb{R}^n : x_1 \geqslant 0\}$ and $\bar{B}_1^+
(0) = \{x \in \mathbb{R}_+^n : |x| \leqslant 1\}$. In this work, we use the
Einstein summation convention with index ranges $i, j, k = 1, \cdots, n$ and
$a, b, c = 2, \cdots, n$. Observe that along $\partial M$, $\{\partial_a \}$
spans $T \partial M$ while $\partial_1$ points inwards of $M$. See the figure
below.

\begin{figure}[h]
  \resizebox{335pt}{205pt}{\includegraphics{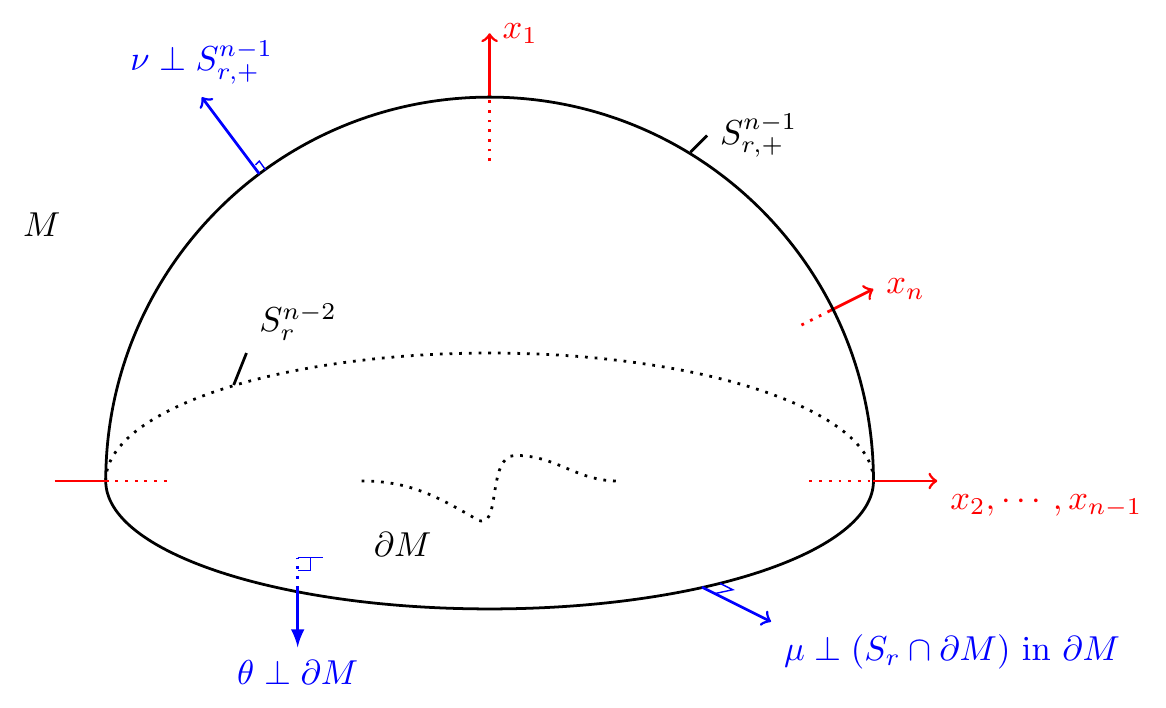}}
  \caption{A hemisphere $S_{r, +}^{n - 1}$ in an asymptotically
  flat manifold $M$ with a noncompact boundary $\partial M$.}
\end{figure}

Now we state the definition of the ADM mass for such asymptotically flat
manifolds.

\begin{definition}
  (ADM mass, {\cite{almaraz-positive-2016}}) The following quantity associated
  with $(M^n, g)$
  \begin{equation}
    m_{\ensuremath{\operatorname{ADM}}} := \lim_{r \rightarrow \infty} m_{}
    (r) \equiv \lim_{r \to + \infty} b_n \left\{ \int_{S_{r, +}^{n - 1}}
    (g_{ij, j} - g_{jj, i}) \nu^i \mathrm{d} \sigma + \int_{\partial S_{r,
    +}^{n - 1}} g_{a 1} \mu^a \mathrm{d} \theta \right\} \label{energy-def}
  \end{equation}
  defined for an asymptotically flat manifold $(M^n, g)$ with a non-compact
  boundary is called the ADM mass, $S_{r, +}^{n - 1} \subset M$ is a large
  standard Euclidean coordinate hemisphere of radius $r$ with outward unit
  normal $\nu$, and $\mu$ is the outward pointing unit co-normal to $\partial
  S_{r, +}^{n - 1}$ in $\partial M$. If the scalar curvature $R_g$ is
  integrable and mean curvature $H_g$ is integrable on $\partial M$, then
  $m_{\ensuremath{\operatorname{ADM}}}$ is well defined. Here $b_n =
  \frac{1}{2 (n - 1) \omega_{n - 1}}$ where $\omega_{n - 1}$ is the volume of
  $(n - 1)$-dimensional standard sphere.
\end{definition}

Almaraz, Barbosa and de Lima {\cite{almaraz-positive-2016}} established a
similar positive mass theorem.

\begin{theorem}
  Given an asymptotically flat manifold $(M, g)$ with $R_g, H_g \geqslant 0$,
  $R_g \in L^1 (M)$ and $H_g \in L^1 (\partial M)$. Then
  $m_{\ensuremath{\operatorname{ADM}}} \geqslant 0$. The mass
  $m_{\ensuremath{\operatorname{ADM}}}$ is zero if and only if $(M, g)$ is
  isometric to $(\mathbb{R}_+^n, \delta)$.
\end{theorem}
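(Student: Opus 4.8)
The plan is to argue by contradiction, adapting the Schoen--Yau minimal hypersurface method to the noncompact boundary setting. For definiteness I would work in dimensions $3 \leqslant n \leqslant 7$, so that the minimizers produced below are regular; when $M$ is spin one can instead run Witten's spinorial argument with a chiral (MIT bag type) boundary condition on $\partial M$, which handles every dimension. So suppose $m_{\operatorname{ADM}} < 0$. After a standard approximation one may also assume $g$ is exactly flat outside a compact set, still with $R_g, H_g \geqslant 0$ and $m_{\operatorname{ADM}} < 0$; but the only feature of the asymptotics I shall really use is the sign of $m_{\operatorname{ADM}}$ itself --- it supplies a barrier in the construction below and, after dimensional reduction, reappears as the negative mass of the reduced manifold, which is what makes the induction close.

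First I would construct a complete, properly embedded, two-sided, area-minimizing hypersurface $\Sigma \subset M$ with free boundary on $\partial M$, meeting $\partial M$ orthogonally and asymptotic to the coordinate half-hyperplane $\Pi = \{x_n = 0\} \cap \mathbb{R}_+^n$. This comes from solving, for each large $R$, a relative Plateau problem: minimize $(n-1)$-area among hypersurfaces in $B_R^+$ that agree with $\Pi$ near $\partial B_R^+$ and whose boundary slides freely along $\partial M$. Here the negativity of $m_{\operatorname{ADM}}$ produces a barrier near infinity --- for instance a foliation by nearly flat, mean-convex hypersurfaces --- trapping the minimizers $\Sigma_R$ in a fixed slab $\{|x_n| \leqslant C\}$, so that a subsequence converges to a nonempty complete minimizer $\Sigma$. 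One then verifies that $(\Sigma, g|_\Sigma)$ is asymptotically flat of dimension $n-1$ with noncompact boundary $\partial\Sigma = \Sigma \cap \partial M$, in the sense of the Definition above.

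Next comes the stability step. Since $\Sigma$ minimizes area with free boundary, for every $\phi \in C_c^\infty(\Sigma)$,
\[ \int_\Sigma \bigl( |A_\Sigma|^2 + \operatorname{Ric}_g(\nu,\nu) \bigr)\phi^2 \;+\; \int_{\partial\Sigma} A^{\partial M}(\nu,\nu)\,\phi^2 \;\leqslant\; \int_\Sigma |\nabla_\Sigma \phi|^2 , \]
where $A_\Sigma$ is the second fundamental form of $\Sigma$, $\nu$ its unit normal (which is tangent to $\partial M$ along $\partial\Sigma$, since $\Sigma \perp \partial M$), and $A^{\partial M}$ the second fundamental form of $\partial M \subset M$. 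Using $H_\Sigma = 0$ and the contracted Gauss equation to replace $\operatorname{Ric}_g(\nu,\nu) + |A_\Sigma|^2$ by $\tfrac12(R_g - R_\Sigma + |A_\Sigma|^2)$, and the splitting $T\partial M = \mathbb{R}\nu \oplus T\partial\Sigma$ to write $A^{\partial M}(\nu,\nu) = H_g - \operatorname{tr}_{T\partial\Sigma} A^{\partial M}$, the inequality becomes the nonnegativity of a Schr\"odinger operator $L = -\Delta_\Sigma + \tfrac12(R_\Sigma - R_g - |A_\Sigma|^2)$ together with a Robin boundary condition built from $A^{\partial M}(\nu,\nu)$. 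Let $u > 0$ be the corresponding solution with $u \to 1$ at infinity; for $n \geqslant 4$ set $\hat g = u^{4/(n-3)} g|_\Sigma$. Then $(\Sigma,\hat g)$ has $R_{\hat g} \geqslant 0$, its boundary has mean curvature $H_{\hat g} \geqslant 0$ --- and it is exactly $H_g \geqslant 0$ on $\partial M$ that keeps this boundary term from changing sign --- while $u$ does not destroy the negative expansion coefficient, so $m_{\operatorname{ADM}}(\Sigma,\hat g) < 0$, contradicting the theorem in dimension $n-1$. The base case $n = 3$ is done directly: $\Sigma$ is then a complete noncompact stable free boundary minimal surface with geodesically convex boundary and the asymptotics of a flat half-plane, and a Cohn--Vossen / Gauss--Bonnet argument shows its total curvature cannot be negative, again a contradiction. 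Hence $m_{\operatorname{ADM}} \geqslant 0$.

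For the rigidity statement, suppose $m_{\operatorname{ADM}} = 0$ but $(M,g)$ is not isometric to $(\mathbb{R}_+^n,\delta)$. A compactly supported deformation of $g$ that strictly decreases $R_g$ somewhere, or strictly decreases $H_g$ somewhere, while keeping both nonnegative, strictly decreases $m_{\operatorname{ADM}}$ (a first-variation computation); this produces a metric satisfying all hypotheses but with negative mass, contradicting the first part. (In the spinorial proof, both statements instead come out of the Lichnerowicz--Weitzenb\"ock identity: the boundary integral is annihilated by the chiral condition and has the correct sign thanks to $H_g \geqslant 0$, and equality forces a parallel spinor, hence flatness.) The hard part, to my mind, is the combination of the construction in the second step with the boundary-term bookkeeping in the third: one must keep the free-boundary minimizers from escaping to infinity (so that $\Sigma$ is nonempty and complete) \emph{and} arrange that after the conformal change the boundary mean curvature stays $\geqslant 0$ and the negative mass truly descends one dimension --- every difficulty beyond the classical boundaryless theorem is concentrated in those steps.
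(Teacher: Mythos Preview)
The paper does not prove this theorem at all: it is quoted from Almaraz--Barbosa--de Lima \cite{almaraz-positive-2016} as background, and no argument is supplied in the present paper. So there is nothing here to compare your sketch against on a step-by-step level.

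That said, your outline is a recognisable adaptation of the Schoen--Yau scheme to the free-boundary setting, and in broad strokes this is one of the approaches that does appear in the literature (the cited reference in fact relies primarily on a spinorial argument with a boundary condition, along the lines of the alternative you mention). A few of your steps are stated more confidently than the actual work warrants: the claim that negativity of $m_{\operatorname{ADM}}$ by itself produces a mean-convex barrier trapping the minimizers in a slab needs a genuine computation (in the boundaryless case one first passes to a harmonically flat end and then checks the sign of the mean curvature of the level half-hyperplanes; the analogue here is not automatic), and the assertion that the conformal factor $u$ ``does not destroy the negative expansion coefficient'' so that $m_{\operatorname{ADM}}(\Sigma,\hat g)<0$ is exactly the delicate point in the dimensional reduction and deserves more than a phrase. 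Your rigidity paragraph is also too quick: a deformation that \emph{decreases} $R_g$ or $H_g$ does not by itself contradict anything, since the hypotheses require nonnegativity; the standard route is instead to show that if $(M,g)$ is not flat one can perturb to a metric with $R_g\geqslant 0$, $H_g\geqslant 0$ and \emph{strictly smaller} mass, or to argue via the equality case of the stability/spinor identity. These are fixable, but as written the sketch glosses over precisely the places where the free-boundary version diverges from the classical one.
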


The ADM mass are defined for non-compact manifolds, it is then an interesting
question to consider quantities for bounded domains. This is the motivation
behind the notion of the {\itshape{quasi-local masses.}} We list a few
important quasi-local masses for interested readers: Brown-York mass
{\cite{brown-quasilocal-1993}}, Liu-Yau mass
{\cite{liu-positivity-2003,liu-positivity-2006}}, Wang-Yau mass
{\cite{wang-generalization-2007}}, Hawking mass
{\cite{hawking-gravitational-1968}} and Huisken's isoperimetric mass
{\cite{huisken-isoperimetric-2006}}. The criterion for a satisfying definition
of a quasi-local mass can be found in Liu and Yau's work
{\cite{liu-positivity-2006}}.

All these masses are evaluated on a closed 2-surface. The work
{\cite{almaraz-positive-2016}} leads us to consider the question:

{\itshape{Are there similar quantities evaluated on surfaces with boundary?}}

We will be concerned with extending the Hawking mass and Huisken's
isoperimetric mass. Our definition of the Hawking mass is the following:

\begin{definition}
  \label{def:hawking mass}(Hawking mass with boundary) Given a 2-surface
  $\Sigma \subset M^3$ with boundary $\partial \Sigma \neq \emptyset$
  intersecting $\partial M$ orthogonally, the Hawking mass is defined to be
  \begin{equation}
    m_H (\Sigma) = \left( \frac{| \Sigma |}{8 \pi} \right)^{\frac{1}{2}}
    \left( \chi (\Sigma) - \frac{1}{8 \pi} \int_{\Sigma} H^2 \right)
    \label{eq:hawking mass} .
  \end{equation}
  In particular, when $\Sigma$ is a disk i.e. $\chi (\Sigma) = 1$, then the
  mass takes the form
  \begin{equation}
    m_H (\Sigma) = \left( \frac{| \Sigma |}{8 \pi} \right)^{\frac{1}{2}}
    \left( 1 - \frac{1}{8 \pi} \int_{\Sigma} H^2 \right) . \label{eq:hawking
    disk}
  \end{equation}
\end{definition}

Observe that the form of the Hawking mass for a disk resembles the ususal
Hawking mass for a topological 2-sphere $\Sigma$ which writes as
\[ m_H (\Sigma) = (\frac{| \Sigma |}{16 \pi})^{\frac{1}{2}} \left( 1 -
   \frac{1}{16 \pi} \int_{\Sigma} H^2 \right) . \]
We also have the following isoperimetric mass with boundary.

\begin{definition}
  \label{isoperimetric boundary}(Isoperimetric mass with boundary) For any
  asymptotically flat manifold $(M^n, g)$ with a non-compact boundary and an
  open set $\Omega \subset M^3$ with finite perimeter and $\Omega \cap
  \partial M$ non-empty, the quantity
  \[ m_{\ensuremath{\operatorname{iso}}} (\Omega) = \frac{2}{\mathcal{H}^2
     (\partial^{\ast} \Omega \cap \ensuremath{\operatorname{int}}M)} [V
     (\Omega) - \frac{\sqrt{2} (\mathcal{H}^2 (\partial^{\ast} \Omega \cap
     \ensuremath{\operatorname{int}}M))^{3 / 2}}{6 \sqrt{\pi} V (\Omega)}] \]
  is called the isoperimetric quasi-local mass where $\mathcal{H}^2$ is the
  2-dimensional measure, $\partial^{\ast} \Omega$ is the reduced boundary of
  $\Omega$ and $V (\Omega)$ is the $3$-dimensional Hausdorff measure of
  $\Omega$. We can take $\Omega$ to be the standard Euclidean hemisphere, let
  $\mathcal{A} (r) =\mathcal{A} (S_{r, +}^2)$ be the area of the sphere $S_{r,
  +}^2$ and $V (r)$ is the volume of the region of $M$ bounded to the interior
  by $S_{r, +}^2$ and $\partial M$. The quantity
  $m_{\ensuremath{\operatorname{ISO}}}$ given by
  \begin{equation}
    m_{\ensuremath{\operatorname{ISO}}} = \limsup_{r \to \infty}
    \frac{2}{\mathcal{H}^2 (\partial^{\ast} \Omega \cap
    \ensuremath{\operatorname{int}}M)} (V (r) - \frac{\sqrt{2} (\mathcal{H}^2
    (\partial^{\ast} \Omega \cap \ensuremath{\operatorname{int}}M))^{3 / 2}}{6
    \sqrt{\pi} V (r)}) \label{huisken isoperimetric mass with boundary}
  \end{equation}
  is called isoperimetric mass with boundary.
\end{definition}

G. Huisken {\cite{huisken-isoperimetric-2006}} defined an isoperimetric mass
for the usual asymptotically flat manifold, see an expression in {\cite[p.
  51]{fan-large-sphere-2009}}. Volkman {\cite{volkmann-free-2014}} defined the same
isoperimetric mass with boundary which he called relative isoperimetric mass (up to a constant
multiple as ours), however, as we will show in the article, his definition is a special case.
Also, our proof of convergence to the ADM mass is more direct.

Fan, Shi and Tam {\cite{fan-large-sphere-2009}} proved that Huisken's
isoperimetric mass evaluated on standard coordinate spheres with increasing
radius approaches the ADM mass, which is the third item of Liu and Yau's
criterion {\cite[Introduction]{liu-positivity-2006}}. 

In this article, we will show that our boundary version of Hawking mass and
isoperimetric mass meet the third requirement of Liu and Yau's list of
criterion of a good quasi-local mass. The article is organized as follows:

In Section \ref{eval}, we evaluate the ADM mass {\eqref{energy-def}} via Ricci
tensor of $M$ and second fundamental form of $\partial M$. In Section
\ref{hawking derivation}, we show that the Hawking mass converges to the ADM
mass. In Section \ref{isoperimetric derivation}, we show that the
isoperimetric quasi-local mass converges to the ADM mass.

{\bfseries{Acknowledgements.}} This work is part of the author's PhD thesis at the Chinese
University of HK. He would like to thank sincerely his PhD advisor Prof. Martin Man-chun Li
for continuous encouragement and support.

\section{Evaluation via Ricci and second fundamental form}\label{eval}

{\bfseries{Notations}} We list the notations used in this note. We extend the
Euclidean distance $| \cdot |$ given by the diffeomorphism $\Psi$ to all of
$M$ by requring $| \cdot | : K \to [0, 1)$.
\begin{eqnarray}
  \Omega &  & \text{a domain intersecting } \partial M, \text{ and } \partial
  \Omega = \Sigma \cup \Pi, \nonumber\\
  &  & \Sigma^+ = \partial \Omega \cap \ensuremath{\operatorname{Int}}M,
  \nonumber\\
  &  & \Pi = \partial \Omega \cap \partial M, \nonumber\\
  g_{i j} &  & \text{metric on } g, \nonumber\\
  h_{a b} &  & \text{induced metric on } \partial M, \nonumber\\
  \nu &  & \text{normal of } \partial \Omega \text{ in } M, \nonumber\\
  \mu &  & \text{normal of } \partial M \text{ in } M, \nonumber\\
  \vartheta &  & \text{normal of } \partial \Sigma = \partial \Pi \text{ in }
  \Pi, \nonumber\\
  \mathrm{d} \sigma &  & (n - 1) \text{-dimensional volume element under
  metric } g, \nonumber\\
  \mathrm{d} \theta &  & (n - 2) \text{-dimensional volume element under
  metric } g, \nonumber\\
  X &  & X = (x_1, \cdots, x_n)  \text{ is the position vector under } \Psi,
  \nonumber\\
  D_r &  & \{x \in M \cup \partial M : |x| \leqslant r\}  \text{ for } r > 1,
  \nonumber\\
  \nabla &  & \text{connection on } M, \nonumber\\
  D &  & \text{induced connection on } \partial M, \nonumber\\
  A &  & \text{second fundamental form of } \partial M \text{ in } M,
  \nonumber\\
  H &  & \text{mean curvature of } \partial M \text{ in } M, \nonumber\\
  G &  & G =\ensuremath{\operatorname{Rc}}- \frac{1}{2} R g, \text{ the
  Einstein tensor of } M. \nonumber
\end{eqnarray}
The barred quantities are their Euclidean counterparts. $\Omega$ is very often
\[ D_r^+ = \{x \in M \cup \partial M : |x| \leqslant r\} \]
where $r > 1$, in this case we write
\[ \Sigma_r^+ = \{x \in \ensuremath{\operatorname{Int}}M : |x| = r\} \]
and
\[ \Pi_r = \{x \in \partial M : |x| \leqslant r\} . \]
Obviously, $\Sigma_r^+$ is a standard Euclidean coordinate hemisphere and
$\partial D_r^+ = \Sigma_r^+ \cup \Pi_r$. When there is no ambiguity, we drop
the subscript for simplicity.

We have similar expressions as {\cite[(1.3),(1.4) and
(1.5)]{miao-evaluation-2016}} where the usual asymptotically flat case is
handled.

\begin{theorem}
  \label{mass by tensors}Suppose that $(M, g)$ is an asymptotically flat
  manifold, then
  \begin{equation}
    m_{\ensuremath{\operatorname{ADM}}} = - c_n \lim_{r \rightarrow \infty}
    [\int_{\Sigma^+} G (X, \nu) \mathrm{d} \sigma + \int_{\partial \Sigma^+}
    (A - H g) (X, \vartheta) \mathrm{d} \theta], \label{ADM by tensors}
  \end{equation}
  here $c_n = \frac{1}{(n - 2) (n - 1) \omega_{n - 1}}$.
\end{theorem}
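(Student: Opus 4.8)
The plan is to start from the coordinate expression \eqref{energy-def} for $m(r)$ and rewrite each of its two integrands as the integrand appearing in \eqref{ADM by tensors}, working modulo terms that vanish in the limit $r\to\infty$ by the decay hypothesis. Recall that on $\Sigma_r^+$ the Euclidean outward normal is $\nu^i = x_i/r$, so $X=r\nu$ on that hemisphere; this is the observation that lets one trade the coordinate vector field for the position vector $X$. The first step is purely Euclidean bookkeeping: expand the linearized Einstein tensor. Writing $g=\delta+h$ with $h=o(r^{-\tau})$ and second derivatives $o(r^{-2-\tau})$, one has
\begin{equation}
  2\,G_{ij}^{\mathrm{lin}} = g_{ik,kj}+g_{jk,ki}-g_{kk,ij}-g_{ij,kk} - \delta_{ij}\big(g_{kl,kl}-g_{kk,ll}\big) + O(r^{-2-2\tau}),
\end{equation}
and contracting with $X^i\nu^j = r\,\nu^i\nu^j$ one should recover, after an integration by parts on $\Sigma_r^+$ that moves one derivative off $g$ and onto $\nu$ (producing curvature-of-the-sphere terms that are lower order, plus a boundary term on $\partial\Sigma_r^+$), precisely $b_n^{-1}c_n^{-1}$ times the interior integrand $(g_{ij,j}-g_{jj,i})\nu^i$. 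This is the same computation as in \cite[(1.3)--(1.5)]{miao-evaluation-2016}; the only new feature is keeping careful track of the boundary contribution along $\partial\Sigma_r^+$ that the integration by parts throws off.

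The second step handles that boundary term together with the existing $\int_{\partial S_{r,+}^{n-1}} g_{a1}\mu^a\,\mathrm d\theta$ term, and identifies their sum with $-c_n^{-1}b_n^{-1}\int_{\partial\Sigma_r^+}(A-Hg)(X,\vartheta)\,\mathrm d\theta$ up to $o(1)$. Here one uses that the second fundamental form of $\partial M$ is, to leading order, $A_{ab} = \tfrac12(g_{a1,b}+g_{b1,a}-g_{ab,1})+O(r^{-1-2\tau})$ (since $\partial_1=-\mu$ points into $M$ and the Christoffel symbols are first order), that $H=h^{ab}A_{ab}=\delta^{ab}A_{ab}+o(r^{-1-\tau})$, and that on $\partial\Sigma_r^+$ one has $X = r\vartheta$ in the appropriate sense (the position vector is tangent to $\partial M$ and radial). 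Contracting $(A-Hg)_{ab}$ with $X^a\vartheta^b$ and matching against the $g_{a1}\mu^a$ term plus the stray boundary term from Step 1 is again linear algebra in the linearized metric; one integrates by parts once more on $\partial\Sigma_r^+$ (a closed $(n-2)$-manifold, so no further boundary appears) to absorb tangential derivatives.

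The third step is to control the error terms: every place where a nonlinear term in $h$, or a Christoffel-squared term, or a curvature-of-$\Sigma_r^+$ correction was dropped contributes an integrand that is $o(r^{-1-\tau})$ on a hypersurface of area $O(r^{n-1})$ — or $o(r^{-2\tau})$ on $\partial\Sigma_r^+$ of area $O(r^{n-2})$ — and since $\tau>\tfrac{n-2}{2}$ these integrate to $o(1)$. The integrability of $R_g$ and of $H_g$ on $\partial M$ (assumed for $m_{\mathrm{ADM}}$ to be well-defined) guarantees that the full, unlinearized $\int_{\Sigma^+}G(X,\nu)+\int_{\partial\Sigma^+}(A-Hg)(X,\vartheta)$ actually converges, so that passing from the linearized identity to the genuine one is legitimate. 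I expect the main obstacle to be the second step: correctly orienting and signing the co-normals $\mu$ and $\vartheta$ and the inward field $\partial_1$, and making sure the boundary term generated by the interior integration by parts in Step 1 combines with the pre-existing $g_{a1}\mu^a$ term with the right coefficient — a sign or factor error there is the easiest way for the constant $c_n$ to come out wrong. The constant itself should be pinned down at the end by testing on a model (e.g. a Schwarzschild-type metric on $\mathbb{R}^n_+$), which fixes $c_n = \tfrac{1}{(n-2)(n-1)\omega_{n-1}}$ consistently with $b_n=\tfrac{1}{2(n-1)\omega_{n-1}}$.
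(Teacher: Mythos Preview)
Your overall strategy---linearize the Einstein tensor, reduce via integration by parts to the coordinate ADM integrand plus boundary contributions, and match those against $(A-Hg)(X,\vartheta)$---is exactly the strategy of the paper's first proof. But your organizational plan differs from the paper's in a way that matters, and there is one point where your description is likely a misconception rather than just shorthand.

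The paper does \emph{not} integrate by parts on the open hemisphere $\Sigma_r^+$. The Miao--Tam identity you cite is a \emph{bulk} identity: it is proved by the divergence theorem on the enclosed solid region, and it holds as stated only for closed hypersurfaces. The paper's device is to add and subtract the integral over the flat base $\Pi_r$, so that $I_1$ is taken over the closed surface $\Sigma_r^+\cup\Pi_r=\partial D_r^+$ and Miao--Tam's formula (their (2.4)) applies verbatim. The correction $I_2$ over $\Pi_r$ and the boundary integral $\int_{\partial\Sigma^+}(A-Hg)(X,\vartheta)$ are then each handled by integration by parts on the \emph{flat disk} $\Pi_r\subset\{x_1=0\}$, which is where all the $\partial\Pi=\partial\Sigma^+$ boundary terms are produced and eventually cancel. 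This is cleaner than what you propose: working on $\Pi_r$ means Euclidean integration by parts in $\mathbb R^{n-1}$ with no curvature corrections, whereas integrating by parts on $\Sigma_r^+$ or on $\partial\Sigma_r^+$ would introduce mean-curvature-of-the-sphere terms that you would then have to track and discard.

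Your Step~2 plan---integrating by parts on the closed $(n-2)$-sphere $\partial\Sigma_r^+$ to absorb tangential derivatives---is not what the paper does and is not obviously sufficient: the terms one needs to cancel (e.g.\ $\int_\Pi g_{1a,a}$ and $\int_\Pi g_{bb,1}$) are bulk integrals over $\Pi_r$, and the matching happens by converting $\partial\Pi$ boundary terms into $\Pi$ integrals via the divergence theorem on the disk, not by manipulations on $\partial\Sigma_r^+$ alone. Your Step~3 error analysis and the linearized formulas for $A_{ab}$, $H$ are correct and match the paper's lemma.
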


We collect some well known asymptotics in the following lemma:

\begin{lemma}
  We have the following asymptotics:
  \begin{align}
    2\ensuremath{\operatorname{Rc}}_{i j} & = g_{ki, kj} + g_{kj, ki} - g_{ij,
    kk} - g_{kk, ij} + O (r^{- 2 - 2 \tau}) \label{decay-ricci} ;\\
    R & = g_{i k, i k} - g_{k k, i i} + O (r^{- 2 - 2 \tau})
    \label{decay-scalar} ;\\
    & \\
    d \theta & = d \bar{\theta} + O (r^{- \tau}) d \bar{\theta} ;\\
    d \sigma & = d \bar{\sigma} + O (r^{- \tau}) d \bar{\sigma} ;\\
    & \\
    \nu - \bar{\nu} & = O (r^{- \tau}) ;\\
    \vartheta - \bar{\vartheta} & = O (r^{- \tau}) ;\\
    & \\
    \mu & = - g^{1 i} \partial_i / g^{11} = - \partial_1 + O (r^{- \tau}) ;\\
    & \\
    A_{a b} & = \frac{1}{2} (g_{a 1, b} + g_{b 1, a} - g_{a b, 1}) + O (r^{- 1
    - 2 \tau})\\
    & = O (r^{- 1 - \tau}) ;\\
    & \\
    H & = \frac{1}{2} (2 g_{1 a, a} - g_{a a, 1}) + O (r^{- 1 - 2 \tau})\\
    & = O (r^{- 1 - \tau}) .
  \end{align}
\end{lemma}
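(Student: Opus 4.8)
The plan is to obtain every line by inserting the decay hypotheses of the definition of asymptotic flatness---namely $g_{ij}-\delta_{ij}=o(r^{-\tau})$, $g_{ij,k}=o(r^{-1-\tau})$ and $g_{ij,kl}=o(r^{-2-\tau})$---into the standard coordinate expressions for the geometric quantities involved, and then tracking the order of each remainder. From these hypotheses one also gets, by a Neumann series for the inverse matrix, $g^{ij}=\delta^{ij}+O(r^{-\tau})$, and by differentiating $g^{ip}g_{pj}=\delta^i_j$ one gets $\partial_k g^{ij}=O(r^{-1-\tau})$; these facts will be used whenever an index is raised or a trace is taken.

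First I would treat $\eqref{decay-ricci}$ and $\eqref{decay-scalar}$. Writing $\operatorname{Rc}_{ij}=\partial_k\Gamma^k_{ij}-\partial_i\Gamma^k_{kj}+\Gamma^k_{kl}\Gamma^l_{ij}-\Gamma^k_{il}\Gamma^l_{kj}$ with $\Gamma^k_{ij}=\tfrac12 g^{kl}(g_{il,j}+g_{jl,i}-g_{ij,l})$, the two quadratic $\Gamma\cdot\Gamma$ terms are products of first derivatives of $g$, hence $O(r^{-2-2\tau})$. In the linear terms $\partial\Gamma$ I would replace $g^{kl}$ by $\delta^{kl}$, the replacement error being $O(r^{-\tau})\cdot O(r^{-2-\tau})=O(r^{-2-2\tau})$; this produces exactly $\tfrac12(g_{ki,kj}+g_{kj,ki}-g_{ij,kk}-g_{kk,ij})$ modulo $O(r^{-2-2\tau})$, which is $\eqref{decay-ricci}$. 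Tracing with $g^{ij}=\delta^{ij}+O(r^{-\tau})$ and using the symmetries $g_{ki,ki}=g_{ik,ik}$ and $g_{ii,kk}=g_{kk,ii}$ collapses the four terms to $2g_{ik,ik}-2g_{kk,ii}$, giving $\eqref{decay-scalar}$ once the contraction error $O(r^{-\tau})\cdot O(r^{-2-\tau})$ is absorbed.

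Next come the measures and normals. The induced metrics on the coordinate hemisphere $\Sigma_r^+$ and on $\Pi_r$ differ from their Euclidean models by $O(r^{-\tau})$, so the ratio of volume densities is $1+O(r^{-\tau})$, yielding the expansions of $\mathrm{d}\sigma$ and $\mathrm{d}\theta$. Realizing $\Sigma_r^+$ as a level set of $r$ and $\partial M$ as $\{x_1=0\}$, each unit normal is a coordinate gradient normalized in the $g$-norm; since both the raised gradient and its $g$-length are $\delta$-close to order $O(r^{-\tau})$, I get $\nu-\bar\nu=O(r^{-\tau})$, $\vartheta-\bar\vartheta=O(r^{-\tau})$, and $\mu=-g^{1i}\partial_i/g^{11}=-\partial_1+O(r^{-\tau})$, the off-diagonal $g^{1a}=O(r^{-\tau})$ supplying the error. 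Finally, for the boundary second fundamental form I would use $A_{ab}=\pm\Gamma^k_{ab}\mu_k$ with $\mu_1=-1+O(r^{-\tau})$ and $\mu_a=O(r^{-\tau})$, the overall sign being fixed by the paper's conventions for $\mu$ and for $A$; since $\Gamma^1_{ab}=\tfrac12(g_{a1,b}+g_{b1,a}-g_{ab,1})+O(r^{-1-2\tau})$, this gives the stated leading term $\tfrac12(g_{a1,b}+g_{b1,a}-g_{ab,1})$, each summand a single first derivative and hence $O(r^{-1-\tau})$. Tracing with $h^{ab}=\delta^{ab}+O(r^{-\tau})$ over $a,b=2,\dots,n$ then produces $H=\tfrac12(2g_{1a,a}-g_{aa,1})+O(r^{-1-2\tau})$.

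Each computation is individually routine; the one point that needs care is the bookkeeping of remainder orders in the curvature and second-fundamental-form expansions, where one must confirm that every contribution coming from $g^{ij}-\delta^{ij}$ and from the quadratic products of Christoffel symbols genuinely sits at the smaller order ($O(r^{-2-2\tau})$ for the curvatures, $O(r^{-1-2\tau})$ for $A$ and $H$) and is therefore dominated by the stated linearized leading terms. This is exactly what the two-derivative decay $r^2|g_{ij,kl}|=o(r^{-\tau})$, combined with $\tau>0$, guarantees.
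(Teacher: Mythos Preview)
Your proposal is correct and follows essentially the same route as the paper: plug the decay hypotheses into the standard coordinate formulas for $\operatorname{Rc}$, $R$, the volume forms, the normals, and $A$, $H$, and check that every nonlinear remainder lands at the stated smaller order. The only cosmetic difference is that the paper cites \cite{miao-evaluation-2016} for the Ricci and scalar expansions and uses a $\bar g$-orthonormal frame to read off the asymptotics of $\mathrm{d}\sigma$, $\mathrm{d}\theta$, and $\nu$, whereas you spell out the $\partial\Gamma+\Gamma\Gamma$ computation and use the level-set/gradient description of the normals; both arguments are equivalent and equally elementary.
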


\begin{proof}
  The asymptotics of Ricci curvature and scalar curvature are well know, see
  for example {\cite[(2.2), (2.6)]{miao-evaluation-2016}}. Choose
  $\bar{g}$-orthonormal frame $e_i$, and let $e_1 = \bar{\nu}$. If $e_i =
  a_i^j \partial_j$, then we see that $a_i^j$ is an orthogonal matrix. So $g
  (e_i, e_j) = a_i^k a_j^l g (\partial_k, \partial_l) = \delta_{i j} + O (r^{-
  \tau})$ and this will give
  \begin{equation}
    \mathrm{d} \sigma = \mathrm{d} \bar{\sigma} + O (r^{- \tau}) \mathrm{d}
    \bar{\sigma} .
  \end{equation}
  Same reasoning applied on $\partial M$ will give for $\mathrm{d} \theta$ the
  asymptotics
  \begin{equation}
    \mathrm{d} \theta = \mathrm{d} \bar{\theta} + O (r^{- \tau}) \mathrm{d}
    \bar{\theta} .
  \end{equation}
  Let $\nu = \nu^i e_i$, then $1 = g (\nu, \nu) = \nu^i \nu^j g (e_i, e_j) =
  \nu^i \nu^j (\delta_{i j} + O (r^{- \tau}))$, we get $\sum_i (X^i)^2 = 1 + O
  (r^{- \tau})$. Similarly, from $g (\nu, e_a) = 0$, we get $0 = X^a + O (r^{-
  \tau})$, then
  \begin{equation}
    \nu - \bar{\nu} = \nu^a e_a + (\nu^1 - 1) \nu_1 = O (r^{- \tau}) .
  \end{equation}
  For $\vartheta$, we have as well $\vartheta - \bar{\vartheta} = (r^{-
  \tau})$.
  
  For the expression of $\mu$, $A$ and $H$ in in terms of the metric, refer to
  {\cite[(2.12) - (2.16)]{almaraz-positive-2016}}. For $\mu$,
  \begin{align}
    \mu & = - g^{1 i} \partial_i / (g^{1 k} g^{1 j} g_{j k})\\
    & = - g^{1 i} \partial_i / g^{11} = - \partial_1 + O (r^{- \tau}) .
  \end{align}
  
  On $\partial M$, we have
  \begin{align}
    A_{a b} & = - \langle \mu, \nabla_a \partial_b \rangle\\
    & = (g^{11})^{- 1 / 2} \Gamma_{a b}^1\\
    & = \frac{1}{2} (g_{a 1, b} + g_{b 1, a} - g_{a b, 1}) + O (r^{- 1 - 2
    \tau})\\
    & = O (r^{- 1 - \tau}) .
  \end{align}
  
  For $H : = H_{\partial M, M}$,
  \begin{align}
    H & = h^{a b} A_{a b}\\
    & = \frac{1}{2} (g^{11})^{1 / 2} (2 g_{1 a, a} - g_{a a, 1})\\
    & = \frac{1}{2} (2 g_{1 a, a} - g_{a a, 1}) + O (r^{- 1 - 2 \tau})\\
    & = O (r^{- 1 - \tau}) .
  \end{align}
  
  That concludes our proof of the asymptotics.
\end{proof}

\begin{proof}[Proof of Theorem \ref{mass by tensors}]We follow
{\cite{miao-evaluation-2016}}. Let
\begin{equation}
  I_1 = \int_{\Sigma^+ \cup \Pi} (- g_{ki, kj} - g_{kj, ki} + g_{ij, kk} +
  g_{kk, ij}) x^i \bar{\nu}^j \mathd \bar{\sigma}
\end{equation}
and
\begin{equation}
  I_2 = \int_{\Pi} (- g_{ki, kj} - g_{kj, ki} + g_{ij, kk} + g_{kk, ij}) x^i
  \bar{\nu}^j \mathd \bar{\sigma},
\end{equation}
then
\begin{align*}
  - 2 \int_{\Sigma^+} R_{i j} x^i \nu^j \mathd \sigma_{} & = \int_{\Sigma^+}
  (- g_{ki, kj} - g_{kj, ki} + g_{ij, kk} + g_{kk, ij}) x^i \bar{\nu}^j \mathd
  \bar{\sigma} + o (1)\\
  & = I_1 + I_2 + o (1) .
\end{align*}

To facilitate the computation, we can assume that manifold $M$ is
diffeomorphic to $\mathbb{R}_+^n$ and extend the metric smoothly to all of \
$\mathbb{R}_+^n$. Because we are evaluating at infinity, the result will be
independent of extensions. We compute $I_1$ first,
\begin{align*}
  I_1 & = \int_{\Sigma^+ \cup \Pi} (- g_{ki, kj} - g_{kj, ki} + g_{ij, kk} +
  g_{kk, ij}) x^i \bar{\nu}^j \mathd \bar{\sigma}\\
  & = (n - 2) \int_{\Sigma^+ \cup \Pi} (g_{k j, k} - g_{k k, j}) \bar{\nu}^j
  \mathd \bar{\sigma} + \int_{\Sigma^+ \cup \Pi} (- g_{k j, k j} + g_{k k, j
  j}) x^i \bar{\nu}^i \mathd \bar{\sigma}\\
  & = (n - 2) \int_{\Sigma^+} (g_{k j, k} - g_{k k, j}) \bar{\nu}^j \mathd
  \bar{\sigma} + \int_{\Sigma^+} (- g_{k j, k j} + g_{k k, j j}) x^i
  \bar{\nu}^i \mathd \bar{\sigma}\\
  & \hspace{10.6em} - (n - 2) \int_{\Pi} (g_{a 1, a} - g_{a a, 1}) \mathd
  \bar{\sigma} .
\end{align*}
In the above the second equality is just {\cite[formula
(2.4)]{miao-evaluation-2016}}. This identity is easily proved using
integration by parts for $C^3$ metrics; for $C^2$ metrics,
{\cite{miao-evaluation-2016}} used approximation. Note that on $\Pi$, all
components of $\bar{\nu}$ is zero except that $\bar{\nu}^1 = - 1$ and $x^i
\bar{\nu}^i = 0$.

We compute $I_2$ using $\bar{\nu}^1 = - 1$ again,
\begin{align*}
  & - I_2\\
  = & \int_{\Pi} (g_{ki, kj} + g_{kj, ki} - g_{ij, kk} - g_{kk, ij}) x^i
  \bar{\nu}^j \mathd \bar{\sigma}\\
  = & \int_{\Pi} (g_{b a, b 1} + g_{b 1, b a} - g_{a 1, b b} - g_{b b, a 1})
  x^a \mathd \bar{\sigma}\\
  = & \int_{\partial \Pi} g_{b a, 1} x^a \bar{\vartheta}^b \mathd \bar{\theta}
  - \int_{\Pi} g_{b a, 1} \frac{\partial x^a}{\partial x^b} \mathd
  \bar{\sigma}\\
  & \hspace{2.0em} + \int_{\partial \Pi} g_{b 1, a} x^a \bar{\vartheta}^b
  \mathd \bar{\theta} - \int_{\Pi} g_{b 1, a} \frac{\partial x^a}{\partial
  x^b} \mathd \bar{\sigma}\\
  & \hspace{2.0em} - \int_{\partial \Pi} g_{a 1, b} x^a \bar{\vartheta}^b
  \mathd \bar{\theta} + \int_{\Pi} g_{a 1, b} \frac{\partial x^a}{\partial
  x^b} \mathd \bar{\sigma}\\
  & \hspace{2.0em} - \int_{\partial \Pi} g_{b b, 1} x^a \bar{\vartheta}^a
  \mathd \bar{\theta} + \int_{\Pi} g_{b b, 1} \frac{\partial x^a}{\partial
  x^a} \mathd \bar{\sigma}\\
  = & \int_{\partial \Pi} g_{b a, 1} x^a \bar{\vartheta}^b \mathd
  \bar{\theta}\\
  & \hspace{2.0em} - \int_{\partial \Pi} g_{b b, 1} x^a \bar{\vartheta}^a
  \mathd \bar{\theta} + (n - 2) \int_{\Pi} g_{b b, 1} \mathd \bar{\sigma} .
\end{align*}

For the boundary term,
\begin{align*}
  & 2 \int_{\partial \Sigma} (A_{a b} - H g_{a b}) x^a \vartheta^b \mathd
  \theta\\
  = & \int_{\partial \Pi} (2 g_{1 a, b} - g_{a b, 1}) x^a \bar{\vartheta}^b
  \mathd \bar{\theta} - \int_{\partial \Pi} (2 g_{1 a, a} - g_{a a, 1}) x^b
  \bar{\vartheta}^b \mathd \bar{\theta} + o (1)\\
  = & - \int_{\partial \Pi} g_{a b, 1} x^a \bar{\vartheta}^b \mathd
  \bar{\theta} + \int_{\partial \Pi} g_{b b, 1} x^a \bar{\vartheta}^a \mathd
  \bar{\theta}\\
  & \hspace{2.0em} + 2 \int_{\partial \Pi} g_{1 a, b} x^a \bar{\vartheta}^b
  \mathd \bar{\theta} - 2 \int_{\Pi} (g_{1 a, a} x^b)_{, b} \mathd
  \bar{\sigma} + o (1)\\
  = & - \int_{\partial \Pi} g_{a b, 1} x^a \bar{\vartheta}^b \mathd
  \bar{\theta} + \int_{\partial \Pi} g_{b b, 1} x^a \bar{\vartheta}^a \mathd
  \bar{\theta}\\
  & \hspace{2.0em} + 2 \int_{\partial \Pi} g_{1 a, b} x^a \bar{\vartheta}^b
  \mathd \bar{\theta} - 2 \int_{\Pi} (g_{1 a, a b} x^b + g_{1 a, a}
  \frac{\partial x^b}{\partial x^b}) \mathd \bar{\sigma} + o (1)\\
  = & - \int_{\partial \Pi} g_{a b, 1} x^a \bar{\vartheta}^b \mathd
  \bar{\theta} + \int_{\partial \Pi} g_{b b, 1} x^a \bar{\vartheta}^a \mathd
  \bar{\theta} + 2 \int_{\partial \Pi} g_{1 a, b} x^a \bar{\vartheta}^b \mathd
  \bar{\theta}\\
  & \hspace{2.0em} - 2 (n - 1) \int_{\Pi} g_{1 a, a} \mathd \bar{\sigma}\\
  & \hspace{2.0em} - 2 \int_{\partial \Pi} g_{1 a, b} x^b \bar{\vartheta}^a
  \mathd \bar{\theta} + 2 \int_{\Pi} g_{1 a, b} \frac{\partial x^b}{\partial
  x^a} \mathd \bar{\sigma} + o (1)\\
  = & - \int_{\partial \Pi} g_{a b, 1} x^a \bar{\vartheta}^b \mathd
  \bar{\theta} + \int_{\partial \Pi} g_{b b, 1} x^a \bar{\vartheta}^a \mathd
  \bar{\theta} - 2 (n - 2) \int_{\Pi} g_{1 a, a} \mathd \bar{\sigma} + o (1) .
\end{align*}

Scalar curvature has asymptotics
\begin{equation}
  R = g_{i k, i k} - g_{k k, i i} + o (r^{- 2 - 2 \tau}),
\end{equation}
hence
\begin{align*}
  & \int_{\Sigma^+} (- g_{k j, k j} + g_{k k, j j}) x^i \bar{\nu}^i \mathd
  \bar{\sigma}\\
  = & - \int_{\Sigma^+} R x^i \bar{\nu}^i \mathd \bar{\sigma} + o (1)\\
  = & - \int_{\Sigma^+} R g (X, \nu) \mathd \sigma + o (1) .
\end{align*}

Collect all calculations of $I_1$, $I_2$ and $2 \int_{\partial \Sigma^+} (A_{a
b} - H g) x^a \vartheta^b \mathd \theta$, we have
\begin{align*}
  & \int_{\Sigma^+} (- 2 \tmop{Rc} + R g) (X, \nu) \mathd \sigma - 2
  \int_{\partial \Sigma^+} (A - H g) (X, \vartheta) \mathd \theta\\
  = & (n - 2) [\int_{\Sigma^+} (g_{k j, k} - g_{k k, j}) \bar{\nu}^j \mathd
  \bar{\sigma} + \int_{\Pi} g_{a 1, a} \mathd \bar{\sigma}] + o (1)\\
  = & (n - 2) [\int_{\Sigma^+} (g_{k j, k} - g_{k k, j}) \bar{\nu}^j \mathd
  \bar{\sigma} + \int_{\partial \Sigma^+} g_{a 1} \bar{\vartheta}^a \mathd
  \bar{\theta}] + o (1)
\end{align*}
and the theorem is proved.
\end{proof}

\begin{remark}
  The proof of Theorem \ref{mass by tensors} works for slightly more general
  bounded open sets. See similar statements of {\cite[Theorem
  2.1]{miao-evaluation-2016}}.
\end{remark}

\subsection{Another Proof}

Here we provide an alternate proof of {\eqref{ADM by tensors}}. The proof of
the standard case is due to Herzlich {\cite{herzlich-computing-2016}}, we
extend the proof to our settings. We have the following identity.

\begin{lemma}
  \label{lm:integrated Bianchi}(Integrated Bianchi identity) Given a
  Riemannian manifold $(M^n, g)$, we use the same notation with the ones given
  at the start of this section. $X$ is a conformal Killing vector field i.e.
  \[ \nabla^i X^j = \frac{1}{n} g^{i j} \ensuremath{\operatorname{div}}X. \]
  $\Omega \subset M$ is a bounded open set whose $\partial \Omega$ decomposes
  as the union of $\Sigma^+ = \partial \Omega \cap
  \ensuremath{\operatorname{int}}M$ and $\Pi = \partial \Omega \cap \partial
  M$ who share a common boundary $\partial \Sigma^+ = \partial \Pi$, $X$ is
  tangent to $\partial M$ and $X$ is also conformal Killling on $(\Pi, h)$
  i.e.
  \[ D^a X^b = \frac{1}{n - 1} h^{a b} \ensuremath{\operatorname{div}}_{\Pi}
     X. \]
  We have then
  \begin{align}
    & \int_{\partial \Pi} (A - H g) (X, \vartheta) + \int_{\Sigma^+} G (X,
    \nu)\\
    = & - \frac{n - 2}{2 n} \int_{\Omega} R\ensuremath{\operatorname{div}}X -
    \frac{n - 2}{n - 1} \int_{\Pi} H\ensuremath{\operatorname{div}}_{\Pi} X.
    \label{integrated Bianchi identity}
  \end{align}
\end{lemma}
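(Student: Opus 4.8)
The plan is to derive \eqref{integrated Bianchi identity} from two applications of the divergence theorem, combined with two structural facts: the Einstein tensor $G=\operatorname{Rc}-\tfrac12 Rg$ is divergence free (contracted second Bianchi identity), and $X$ is conformal Killing on both $(M,g)$ and $(\Pi,h)$. First I would view $\omega_i=G_{ij}X^j$ as a $1$-form on $M$. Since $\nabla^jG_{ij}=0$, and since the hypothesis $\nabla^iX^j=\tfrac1n g^{ij}\operatorname{div}X$ together with the symmetry of $G$ gives $G_{ij}\nabla^iX^j=\tfrac1n(\operatorname{tr}_g G)\operatorname{div}X$ with $\operatorname{tr}_g G=R-\tfrac n2 R=-\tfrac{n-2}{2}R$, one obtains
\[
  \operatorname{div}\omega=-\frac{n-2}{2n}\,R\operatorname{div}X .
\]
Integrating over $\Omega$, and noting that the outward unit normal of $\Omega$ is $\nu$ along $\Sigma^+$ and $\mu$ along $\Pi\subset\partial M$ (the corner set $\partial\Sigma^+$ being negligible for the divergence theorem on the Lipschitz domain $\Omega$), this yields
\[
  \int_{\Sigma^+}G(X,\nu)+\int_{\Pi}G(X,\mu)=-\frac{n-2}{2n}\int_{\Omega}R\operatorname{div}X .
\]

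Next I would rewrite the interface term $\int_\Pi G(X,\mu)$ as a boundary integral over $\partial\Sigma^+$ plus a bulk term on $\Pi$. Since $X$ is tangent to $\partial M$ we have $g(X,\mu)=0$, so $G(X,\mu)=\operatorname{Rc}(X,\mu)$ on $\Pi$, and the contracted Codazzi equation for the hypersurface $\Pi\subset M$ (with the conventions $A_{ab}=-\langle\mu,\nabla_a\partial_b\rangle$, $H=h^{ab}A_{ab}$ fixed in Section~\ref{eval}) expresses this as $\operatorname{Rc}(X,\mu)=(\operatorname{div}_\Pi A)(X)-D_X H$. I would then introduce on $\Pi$ the $1$-form $\zeta=A(X,\cdot)-H\,g(X,\cdot)$ and compute, using that $X$ is conformal Killing on $(\Pi,h)$ so that $A_{ab}D^bX^a=\tfrac1{n-1}(\operatorname{tr}_h A)\operatorname{div}_\Pi X=\tfrac{H}{n-1}\operatorname{div}_\Pi X$,
\begin{align*}
  \operatorname{div}_\Pi\zeta
  &=(\operatorname{div}_\Pi A)(X)+A_{ab}D^bX^a-D_X H-H\operatorname{div}_\Pi X\\
  &=\operatorname{Rc}(X,\mu)-\frac{n-2}{n-1}\,H\operatorname{div}_\Pi X .
\end{align*}
Applying the divergence theorem on $\Pi$, whose boundary $\partial\Pi=\partial\Sigma^+$ carries the outward conormal $\vartheta$, and using $\zeta(\vartheta)=(A-Hg)(X,\vartheta)$, gives
\[
  \int_\Pi\operatorname{Rc}(X,\mu)=\int_{\partial\Sigma^+}(A-Hg)(X,\vartheta)+\frac{n-2}{n-1}\int_\Pi H\operatorname{div}_\Pi X .
\]
Substituting this into the displayed identity on $\Omega$ and rearranging produces exactly the asserted formula.

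The only points that need genuine care are the orientation conventions for the unit normals (all outward relative to the respective domain $\Omega$, $\Pi$, or $\partial\Pi$), and — the main obstacle — pinning down the precise form and sign of the contracted Codazzi identity relating $\operatorname{Rc}(X,\mu)$ to $\operatorname{div}_\Pi A$ and $D_X H$, so that it is compatible with the sign conventions for $A$ and $H$ from Section~\ref{eval}; a flipped sign here would change the sign of the $\partial\Sigma^+$ term and break the matching with Theorem~\ref{mass by tensors}. Once those conventions are locked down, the lemma follows from the divergence theorem applied twice, with no analytic subtleties since $\Omega$ is bounded and all quantities are smooth.
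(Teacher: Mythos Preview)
Your proposal is correct and follows essentially the same route as the paper's proof: apply the divergence theorem to $G_{ij}X^j$ on $\Omega$ using the contracted Bianchi identity and the conformal Killing condition, then handle the $\Pi$--contribution via the contracted Codazzi equation and a second integration by parts on $\Pi$ using the conformal Killing condition on $(\Pi,h)$. The only cosmetic difference is that the paper writes Codazzi directly as $G(X,\nu)=X^bD^a(A_{ab}-Hh_{ab})$ on $\Pi$ and moves the derivative onto $X$, whereas you package the same computation as $\operatorname{div}_\Pi\zeta$ for $\zeta=(A-Hg)(X,\cdot)$; these are the same integration by parts. Your caveat about fixing the sign in Codazzi to match the conventions $A_{ab}=-\langle\mu,\nabla_a\partial_b\rangle$, $H=h^{ab}A_{ab}$ is well placed and is the only point requiring care.
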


\begin{proof}
  By divergence theorem,
  \begin{align*}
    \int_{\partial \Omega} G (X, \nu) & = \int_{\Omega} \nabla^i (G_{i j}
    X^j)\\
    & = \int_{\Omega} X^j \nabla^i G_{i j} + G_{i j} \nabla^i X^j\\
    & = \int_{\Omega} G_{i j} (\frac{1}{n} \ensuremath{\operatorname{div}}X
    g^{i j})\\
    & = \frac{1}{n} \int_{\Omega} \ensuremath{\operatorname{div}}X G_{i j}
    g^{i j}\\
    & = \frac{2 - n}{2 n} \int_{\Omega} R\ensuremath{\operatorname{div}}X.
  \end{align*}
  
  Since $X$ is tangent to $\partial M$, $\nu$ is normal to $\partial M$ along
  $\Pi$, we can use the Gauss-Codazzi equation
  \[ G (X, \nu) = X^b D^a (A_{a b} - H h_{a b}) \]
  to deal with $G (X, \nu)$ integrated on $\Pi$, we have
  \begin{align*}
    \int_{\Pi} G (X, \nu) & = \int_{\Pi} X^b D^a (A_{a b} - H h_{a b})\\
    & = \int_{\partial \Pi} (A - H g) (X, \vartheta) - \int_{\Pi} (A_{a b} -
    H h_{a b}) D^a X^b\\
    & = \int_{\partial \Pi} (A - H g) (X, \vartheta) - \frac{1}{n - 1}
    \int_{\Pi} (A_{a b} - H h_{a b}) \ensuremath{\operatorname{div}}_{\Pi} X
    h^{a b}\\
    & = \int_{\partial \Pi} (A - H g) (X, \vartheta) + \frac{n - 2}{n - 1}
    \int_{\Pi} H\ensuremath{\operatorname{div}}_{\Pi} X.
  \end{align*}
  
  Combining the two formulas above, we obtain
  \[ - \frac{n - 2}{n - 1} \int_{\Pi} H\ensuremath{\operatorname{div}}_{\Pi} X
     - \frac{n - 2}{2 n} \int_{\Omega} R\ensuremath{\operatorname{div}}X =
     \int_{\Sigma^+} G (X, \nu) + \int_{\partial \Sigma^+} (A - H g) (X,
     \vartheta) . \]
  
\end{proof}

\begin{proof}[Another proof of Theorem \ref{ADM by tensors}]For any
sufficiently large $R > 1$, we define a cutoff function $\chi_R$ such that it
is zero inside the hemisphere of radius $R / 2$, equals 1 outside the
hemisphere of radius $\frac{3}{4} R$ and it satisfies the following bounds on
its derivatives
\[ | \nabla \chi_R | \leqslant C R^{- 1}, \hspace{1em} | \nabla^2 \chi_R |
   \leqslant C R^{- 2}, \hspace{1em} | \nabla^3 \chi_R | \leqslant C R^{- 3}
\]
for some universal constant $C$ not depending on $R$. We write in short $\chi
= \chi_R$ when there is no confusion. We then define a half-annulus $A_R =
B_R^+ \setminus B_{R / 4}^+$ and a metric $h = g \chi + (1 - \chi) \delta$,
here $\delta$ is the standard Euclidean metric. Now we use subscript or
supscript $e, h, g$ on a term to denote that this term is evaluated under
corresponding metric.

Using {\eqref{integrated Bianchi identity}} by assigning $\Omega = A_R$,
\begin{align*}
  & \int_{\partial \Pi_R} (A - H g) (X, \vartheta) + \int_{\Sigma_R} G (X,
  \nu)\\
  = & \int_{\Omega_R} G_{i j} \nabla^i X^j + \int_{\Pi_R} (A_{a b} - H h_{a
  b}) D^a X^b\\
  = & \frac{2 - n}{2 n} \int_{\Omega_R} R \tmop{div} X + \int_{\Omega_R} G_{i
  j} (\nabla^i X^j)^o\\
  & \hspace{2.0em} + \frac{2 - n}{n - 1} \int_{\Pi_R} H \tmop{div}_{\Pi} X +
  \int_{\Pi_R} (A_{a b} - H h_{a b}) (D^a X^b)^o\\
  = : & I_1 + I_2 + J_1 + J_2,
\end{align*}
where $(\nabla^i X^j)^{\omicron} = \nabla^i X^j - \frac{1}{n} g^{i j}
\tmop{div} X$ and $(D^a X^b)^o = D^a X^b - \frac{1}{n - 1} h^{a b}
\tmop{div}_{\Pi} X$.

We estimate these terms separately. For $I_1$,
\begin{align*}
  & \int_{\Omega_R} R \tmop{div} X \mathd \mathcal{H}^n_g - \int_{\Omega_R} R
  \tmop{div}^e X \mathd \mathcal{H}^n_e\\
  \leqslant & C \sup (| \Gamma | |X| |R|) \mathcal{H}^n_g (\Omega_R)\\
  \leqslant & C R^{- \tau - 1} R R^{- 2 - \tau} R^n\\
  \leqslant & C R^{n - 2 \tau - 2} = o (1) ;
\end{align*}

For $J_1$,
\begin{align*}
  & \int_{\Pi_R} H \tmop{div}_{\Pi} X d\mathcal{H}^{n - 1}_g - \int_{\Pi_R} H
  \tmop{div}_{\Pi}^e X \mathd \mathcal{H}^{n - 1}_e\\
  = & \int_{S_R} H (\delta^h) X - \int_{S_R} H (\delta^e) X\\
  \leqslant & C \sup (| \Gamma | |X| |H|) \mathcal{H}^{n - 1}_e (S_R)\\
  \leqslant & C R^{- 1 - \tau} R R^{- 1 - \tau} R^{n - 1}\\
  \leqslant & C R^{n - 2 \tau - 2} = o (1) .
\end{align*}
It is easy to see that the same reasoning applies to the terms $I_2$ and
$J_2$, we get $I_2, J_2 = o (1)$. We have that
\begin{align*}
  & \int_{\partial \Pi_R} (A - H g) (X, \vartheta) + \int_{\Sigma_R^+} G (X,
  \nu)\\
  = & \frac{2 - n}{2 n} \int_{\Omega_R} R \tmop{div}^e X \mathd
  \mathcal{H}^n_e + \frac{2 - n}{n - 1} \int_{S_R} H \tmop{div}^e_{\Pi} X
  \mathd \mathcal{H}^{n - 1}_e + o (1)\\
  = & K_1 + K_2 + o (1) .
\end{align*}

Now we estimate $K_1$ and $K_2$.

\begin{align*}
  K_1 & = \int_{\Omega_R} R \tmop{div}_e X \mathd \mathcal{H}^n_e\\
  & = \int_{\Omega_R} \tmop{div} X Q (e, g) \mathd \mathcal{H}^n_e + n
  \int_{\Sigma_R^+} \langle \mathbb{U}, n \rangle \mathd \mathcal{H}^n_e\\
  & \hspace{2.0em} - n \int_{\Sigma_{R / 4}^+} \langle \mathbb{U}, n \rangle
  \mathd \mathcal{H}^n_e + n \int_{\Pi_R \sim \Pi_{R / 4}} \langle \mathbb{U},
  n \rangle \mathd \mathcal{H}^n_e\\
  & = n \int_{\Sigma_R^+} \langle \mathbb{U}, n \rangle \mathd
  \mathcal{H}^n_e + n \int_{\Pi_R \sim \Pi_{R / 4}} \langle \mathbb{U}, n
  \rangle \mathd \mathcal{H}^n_e + o (1) .
\end{align*}

And for $K_2$,
\begin{align*}
  2 K_2 & = 2 \int_{S_R} H \tmop{div}^e_{\Pi} X \mathd \mathcal{H}^n_e\\
  & = (n - 1) \int_{S_R} 2 H \mathd \mathcal{H}^n_e + o (1)\\
  & = (n - 1) \int_{S_R} (2 v_{1 a, a} - v_{a a, 1}) \mathd \mathcal{H}^n_e +
  o (1) .
\end{align*}

Along $S_R$, since $n = - \partial_1$, we have that
\[ \langle \mathbb{U}, n \rangle = v_{i j, j} n^i - v_{j j, i} n^i = - v_{1 j,
   j} + v_{j j, 1} = - v_{1 a, a} + v_{a a, 1} . \]
So
\begin{align*}
  & \int_{\partial \Pi_R} (A - H g) (X, \vartheta) + \int_{\Sigma_R^+} G (X,
  \nu)\\
  = & \frac{2 - n}{2} \int_{\Sigma_R^+} \langle \mathbb{U}, n \rangle \mathd
  \mathcal{H}^n_e + \frac{2 - n}{2} \int_{S_R} (- v_{1 a, a} + v_{a a, 1})
  \mathd \mathcal{H}^n_e\\
  & \hspace{2.0em} \frac{2 - n}{2} \int_{S_R} (2 v_{1 a, a} - v_{a a, 1})
  \mathd \mathcal{H}^n_e + o (1)\\
  = & \frac{2 - n}{2} [\int_{\Sigma_R^+} \langle \mathbb{U}, n \rangle \mathd
  \mathcal{H}^n_e + \int_{S_R} g_{1 a, a} \mathd \mathcal{H}^n_e] + o (1)\\
  = & \frac{2 - n}{2} [\int_{\Sigma_R^+} \langle \mathbb{U}, n \rangle \mathd
  \mathcal{H}^n_e + \int_{\partial \Sigma_R^+} g_{1 a} \vartheta^a \mathd
  \mathcal{H}^{n - 2}_e] + o (1)\\
  = & (2 - n) (n - 1) \omega_{n - 1} m_{\tmop{ADM}} + o (1)
\end{align*}
thus proving {\eqref{ADM by tensors}}.
\end{proof}

\subsection{A graphical example}

Motivated by {\cite{lam-graphs-2010}}, we give a graphical example where
$\partial M$ is given as a graph of a function. Set $X = (x_1, \ldots, x_n)$
and $B_{\rho}$ be the ball centered at $0$ with radius $\rho$. Let $n
\geqslant 2$, given a function $u : \mathbb{R}^n \setminus B_1 \to
\mathbb{R}$, the set
\[ \{(x_0, X) : X \in \mathbb{R}^n \setminus B_1, x_0 \geqslant u (X)\} \]
is our $M \setminus K$. We can extend $M \setminus K$ arbitrarily. Suppose
that $u$ is bounded and has asymptotics
\begin{equation}
  |x|^{n - 1} |D u| + |x|^n |D^2 u| = O (1) . \label{asymptotics of u}
\end{equation}
$\partial M$ contains the set $\{(u (X), X) : X \in \mathbb{R}^n \setminus B_1
\}$, therefore $M$ has a non-compact boundary. The asymptotic flat structure
of $M$ can be given by the map
\[ \Psi : (x_0, X) \mapsto (x_0 + v (x_0) u (X), X) \]
where $v (0) = 1$ with $v$ has the same asymptotics of $u$ and $M$ carries
metric $g$ induced from this map from $\mathbb{R}^{n + 1}$. Let $e_i$ where $i
= 0, 1, \cdots, n$ be the standard orthonormal basis of Euclidean space
$\mathbb{R}^{n + 1}$ and $\langle \cdot, \cdot \rangle$ be the standard
Euclidean inner product. Let $\hat{e}_i$ be the vector field induced by the
map $\Psi$ and the coordinate system $(x_0, X)$. It is easy to see that
\begin{align}
  \hat{e}_0 & = (1 + u (X) \partial_0 v) e_0,\\
  \hat{e}_a & = v (x_0) \partial_a u e_0 + e_a,
\end{align}
and the metric takes the form
\begin{align}
  g_{0 0} = \langle \hat{e}_0, \hat{e}_0 \rangle = & 1 + 2 u (X) \partial_0 v
  + u (X)^2 | \partial_0 v|^2,\\
  g_{0 a} = \langle \hat{e}_0, \hat{e}_0 \rangle = & v (x_0) \partial_a u + u
  (X) v (x_0) \partial_0 v \partial_a u,\\
  g_{a a} = \langle \hat{e}_a, \hat{e}_0 \rangle = & v (x_0)^2 | \partial_a
  u|^2 + 1.
\end{align}
Here the index $a$ ranges $1, \ldots, n$.

Since when $x_0^2 + |X|^2 = r$ under standard Euclidean metric, either $|x_0
| \geqslant r / 2$ or $|X| \geqslant r / 2$, considering also that $u, v$ are
bounded, $|g_{i j} - \delta_{i j} | = O (r^{1 - n})$. The decay of first and
second order derivatives of the metric follows similarly. So $(M^{n + 1}, g)$
is asymptotically flat with a non-compact boundary. The asymptotics specified
by {\eqref{asymptotics of u}} is related to complete minimal surfaces regular
at infinity, see the work of Schoen {\cite{schoen-uniqueness-1983}} and
Volkman {\cite{volkmann-free-2014}}.

Calculating the ADM mass using metric induced from this map is complicated.
But we are able to derive a simple formula for the mass in this graphical
case.

\begin{theorem}
  If M is an asymptotically flat manifold with a non-compact boundary,
  $\partial M$ is given by a function $u$ on $\mathbb{R}^n \setminus B_1$ with
  asymptotics {\eqref{asymptotics of u}}, then the mass has the simpler form
  \[ m_{\ensuremath{\operatorname{ADM}}} = (n - 1) \lim_{\rho \to \infty}
     \int_{\partial B_{\rho}} \partial_{\rho} u \mathrm{d} \bar{\theta} . \]
  where $\partial_{\rho}$ is radial unit normal to $\partial B_{\rho}$ in
  $B_{\rho}$ with respect to Euclidean metric and $\mathrm{d} \bar{\theta}$ is
  the Euclidean $(n - 1)$-dimensional volume element.
\end{theorem}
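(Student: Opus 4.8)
The plan is to exploit that the metric $g$ here is \emph{flat} and to reduce, via Theorem \ref{mass by tensors}, to a single boundary integral over the large coordinate sphere. Since $m_{\tmop{ADM}}$ is insensitive to how the metric is extended over the compact set $K$, I may take the auxiliary function $v\equiv 1$, so that $\Psi(x_0,X)=(x_0+u(X),X)$ and $g$ is literally the pull-back of the Euclidean metric of $\mathbb{R}^{n+1}$ by a diffeomorphism; in particular $g$ is flat on $M\setminus K$ and its Einstein tensor $G=\tmop{Rc}-\tfrac12 Rg$ vanishes identically there. Feeding this into Theorem \ref{mass by tensors}, with its ``$n$'' replaced by $n+1$ because $\dim M=n+1$, the interior term $\int_{\Sigma^+}G(X,\nu)\,\mathd\sigma$ drops out and
\[
 m_{\tmop{ADM}}=-c_{n+1}\lim_{r\to\infty}\int_{\partial\Sigma_r^+}(A-Hg)(X,\vartheta)\,\mathd\theta ,
\]
with $c_{n+1}$ the constant of Theorem \ref{mass by tensors}. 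So everything is reduced to the asymptotics of the second fundamental form $A$ and the mean curvature $H$ of $\partial M$, which near infinity is the Euclidean graph $\{x_0=u(X)\}$.

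Next I would feed in the classical graph formulas together with $|x|^{n-1}|Du|+|x|^n|D^2u|=O(1)$. This yields $A_{ab}=u_{ab}+O(r^{2-3n})$ and $H=\Delta u+O(r^{2-3n})$, where $u_{ab}=\partial_a\partial_b u$, and an induced metric $\delta+\mathrm{d}u\otimes\mathrm{d}u=\delta+O(r^{2-2n})$ on $\partial M$. In the asymptotically flat chart $\partial\Sigma_r^+$ is the Euclidean sphere $\partial B_r$ inside $\{x_0=0\}$; the conormal $\vartheta$ and the position vector $X$ agree, up to $O(r^{-\tau})$ with $\tau=n-1$, with the Euclidean radial unit field and the Euclidean position vector; and $\mathd\theta=\mathd\bar\theta\,(1+O(r^{-\tau}))$. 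Substituting, and writing $X=r\omega$ on $\partial B_r$,
\[
 (A-Hg)(X,\vartheta)=\tfrac1r\bigl(u_{ab}X^aX^b-|X|^2\,\Delta u\bigr)+o(r^{1-n})=r\bigl(\partial_\rho^2 u-\Delta u\bigr)+o(r^{1-n}).
\]

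Now I would invoke the polar identity $\Delta u=\partial_\rho^2 u+\tfrac{n-1}{\rho}\partial_\rho u+\tfrac1{\rho^2}\Delta_{S^{n-1}}u$, which rewrites the integrand as $-(n-1)\partial_\rho u-\tfrac1\rho\Delta_{S^{n-1}}u+o(r^{1-n})$. Integrating over $\partial B_\rho$, the term $\int_{\partial B_\rho}\Delta_{S^{n-1}}u\,\mathd\bar\theta$ vanishes because it is a total divergence on the closed manifold $\partial B_\rho$, while each remaining error, being $o(r^{1-n})$ pointwise over a sphere of area $O(r^{n-1})$, integrates to $o(1)$. Only $-(n-1)\int_{\partial B_\rho}\partial_\rho u\,\mathd\bar\theta$ survives, and inserting $c_{n+1}$ produces $m_{\tmop{ADM}}$ as a multiple of $\lim_{\rho\to\infty}\int_{\partial B_\rho}\partial_\rho u\,\mathd\bar\theta$ of the form in the statement, the factor $n-1$ being exactly the radial weight of the Euclidean Laplacian.

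The part I expect to be the real work is the error accounting: one has to verify that replacing $A,H$ by $u_{ab},\Delta u$, $\vartheta$ by the radial field, $X$ by the Euclidean position vector, and $\mathd\theta$ by $\mathd\bar\theta$ each costs only $o(1)$ after integration over the sphere, the case $n=2$ being borderline and requiring the sharper ``regular at infinity'' expansion of $u$ rather than mere boundedness. That same expansion also secures the existence of $\lim_{\rho\to\infty}\int_{\partial B_\rho}\partial_\rho u\,\mathd\bar\theta$, since by the divergence theorem this equals $\int_{\partial B_1}\partial_\rho u\,\mathd\bar\theta+\int_{B_\rho\setminus B_1}\Delta u$, whose convergence needs $\Delta u$ to be integrable near infinity.
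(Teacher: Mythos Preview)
Your proposal is correct and follows essentially the same route as the paper: both arguments observe that the ambient metric is flat so the Einstein term in Theorem~\ref{mass by tensors} disappears, reduce to the boundary integral of $(A-Hg)(X,\vartheta)$, plug in the standard graph formulas for $A$ and $H$, and finish with the polar decomposition $\Delta u=\operatorname{Hess}u(\partial_\rho,\partial_\rho)+\tfrac{n-1}{\rho}\partial_\rho u+\Delta_{\partial B_\rho}u$ together with $\int_{\partial B_\rho}\Delta_{\partial B_\rho}u=0$. Your extra remarks on the $n=2$ borderline and on rewriting $\int_{\partial B_\rho}\partial_\rho u$ via the divergence theorem to discuss convergence are not in the paper but are reasonable side observations; note, incidentally, that the paper (like you) does not track the dimensional constant $c_{n+1}$ through to the end, so the overall prefactor in the stated formula should be read up to normalization.
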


\begin{proof}
  Without loss of generality, we extend $u$ to all of $\mathbb{R}^n$ i.e. $u :
  \mathbb{R}^n \to \mathbb{R}$. We identify $B_{\rho}$ as the set $\{(0, X) :
  |X| \leqslant \rho\} \subset \mathbb{R}^{n + 1}$, then $\partial B_{\rho} =
  \{(0, X) : |X| = \rho\}$.
  
  We have by well known formulas (see for example {\cite[Section
  2]{eichmair-plateau-2009}})
  \begin{align*}
    v & = \sqrt{1 + |D u|^2} ;\\
    A_{a b} & = v^{- 1} u_{a b} = O (\rho^{- n}) ;\\
    g_{a b} & = \delta_{a b} + u_a u_b\\
    & = \delta_{a b} + O (\rho^{2 - 2 n}) ;\\
    g^{a b} & = \delta^{a b} - v^{- 2} u^a u^b ;\\
    H & = v^{- 1} u_{a b} (\delta^{a b} - v^{- 2} u^a u^b)\\
    & = v^{- 1} \Delta u - v^{- 3} u_{a b} u^a u^b\\
    & = v^{- 1} \Delta u + O (\rho^{- 3 n + 2})
  \end{align*}
  where $g$ gives the induced metric on the graph. Now we do calculations on
  $(B_{\rho}, g)$. The normal $\vartheta$ to $\partial B_{\rho}$ in $B_{\rho}$
  is given by
  \[ \vartheta^b = \hat{g}^{b c} \vartheta_c = \hat{g}^{b c}
     \frac{x_c}{|X|_{\hat{g}}} = \partial_{\rho} + O (\rho^{2 - 2 n}) . \]
  So $X = \rho \partial_{\rho} = \rho \vartheta + O (\rho^{3 - 2 n})$ and
  \begin{align*}
    & (A - H g) (X, \vartheta)\\
    = & \rho A (\partial_{\rho}, \partial_{\rho}) - \rho H\\
    = & \rho v^{- 1} \ensuremath{\operatorname{Hess}}u (\partial_{\rho},
    \partial_{\rho}) - \rho v^{- 1} \Delta u + O (\rho^{- 3 n + 3})\\
    = & \rho \ensuremath{\operatorname{Hess}}u (\partial_{\rho},
    \partial_{\rho}) - \rho \Delta u + O (\rho^{- 3 n + 3}) .
  \end{align*}
  On $\partial B_{\rho}$, we decompose the standard Laplacian of
  $\mathbb{R}^n$
  \[ \Delta u = \Delta_{\partial B_{\rho}} u
     +\ensuremath{\operatorname{Hess}}u (\partial_{\rho}, \partial_{\rho}) +
     H_{\partial B_{\rho}, B_{\rho}} \partial_{\rho} u \]
  where $H_{\partial B_{\rho}, \mathbb{R}^n} = (n - 1) / \rho$ is the mean
  curvature of a sphere of radius $\rho$ in $\mathbb{R}^n$.
  
  Hence
  \[ (A - H g) (X, \vartheta) = - (n - 1) \partial_{\rho} u - \rho
     \Delta_{\partial B_{\rho}} u + O (\rho^{- 3 n + 3}) . \]
  Suppose that $\Omega$ is an region intersecting $M$ at $\partial B_{\rho}$
  satisfying requirements of {\cite[Theorem 2.1]{miao-evaluation-2016}}.
  Because $M$ is an unbounded region in $\mathbb{R}^n$, now the mass by
  Theorem \ref{mass by tensors} has only a boundary term, then
  \begin{align*}
    m_{\ensuremath{\operatorname{ADM}}} = & - \int_{\partial B_{\rho}} (A - H
    g) (X, \vartheta) \mathrm{d} \theta + o (1)\\
    = & \int_{\partial B_{\rho}} [(n - 1) \partial_{\rho} u + \rho
    \Delta_{\partial B_{\rho}} u] \mathrm{d} \bar{\theta} + o (1)\\
    = & (n - 1) \int_{\partial B_{\rho}} \partial_{\rho} u \mathrm{d}
    \bar{\theta} + o (1) .
  \end{align*}
  
  The calculation obviously works for nearly round surfaces (see the
  definition in {\cite[Definition 2.1]{miao-quasi-local-2017}}) , we omit the
  details.
\end{proof}

\begin{remark}
  We point out that this expression is the same as {\cite[Definition
  2.5]{volkmann-free-2014}} and we refer readers to the positive mass theorem
  proved in there.
\end{remark}

\section{Hawking mass derivation}\label{hawking derivation}

We are going to extend Theorem 1.2 in {\cite{miao-quasi-local-2017}} to the
boundary case and thus obtain a formula of Hawking type mass. We do not pursue
generalities to deal with nearly round surfaces here and we choose coordinate
hemispheres as approximating surfaces. We have the following two trivial
lemmas.

\begin{lemma}
  Let $X = (x^1, \cdots, x^n)$ be the position vector given by coordinates
  near infinity. For each large $\rho$, on $\Sigma_{\rho}$
  \begin{equation}
    |X - \rho \bar{\nu} | = O (\rho^{1 - \tau}) \hspace{1em}
  \end{equation}
  and
  \begin{equation}
    |X - \rho \bar{\vartheta} | = O (\rho^{1 - \tau}) \text{ along } \partial
    \Sigma_{\rho} .
  \end{equation}
\end{lemma}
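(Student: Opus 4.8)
The plan is to work entirely in the Euclidean coordinate chart near infinity, where $\Sigma_\rho = \Sigma_\rho^+$ is by definition the standard coordinate hemisphere $\{x \in \operatorname{Int}M : |x| = \rho\}$. On this set the position vector $X = (x^1,\dots,x^n)$ satisfies $|X| = \rho$ exactly, and the Euclidean outward unit normal is $\bar\nu = X/|X| = X/\rho$. Hence the identity $X = \rho\,\bar\nu$ holds \emph{on the nose} in the Euclidean chart, and there is literally nothing to estimate there. The content of the lemma is that the \emph{same} $\bar\nu$ (the Euclidean normal) differs from what one would want only by lower-order terms, but since the statement as written only compares $X$ with $\rho\bar\nu$, the left-hand side is exactly $0$ in the chart; the $O(\rho^{1-\tau})$ is the worst one could say if $\bar\nu$ were replaced by the $g$-unit normal $\nu$. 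So first I would simply record $X - \rho\bar\nu = 0$ on $\Sigma_\rho$, which trivially satisfies the bound.

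For the second assertion, along $\partial\Sigma_\rho = \Sigma_\rho \cap \partial M = \{x : x_1 = 0,\ |x| = \rho\}$, one has $X = (0, x_2,\dots,x_n)$ with $\sum_{a=2}^n (x_a)^2 = \rho^2$. The Euclidean unit co-normal $\bar\vartheta$ to $\partial\Sigma_\rho$ inside $\partial M$ is the radial direction within the hyperplane $\{x_1 = 0\}$, namely $\bar\vartheta = (0, x_2,\dots,x_n)/\rho = X/\rho$. Therefore again $X = \rho\,\bar\vartheta$ exactly on $\partial\Sigma_\rho$, and the bound $O(\rho^{1-\tau})$ holds trivially. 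The only mild subtlety is to confirm that the various normals in the statement are indeed meant to be the \emph{barred} (Euclidean) ones — the bars on $\bar\nu$, $\bar\vartheta$ make this explicit — so that no metric distortion enters; this is consistent with the asymptotics lemma already proved, where $\nu - \bar\nu = O(r^{-\tau})$ and $\vartheta - \bar\vartheta = O(r^{-\tau})$.

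If instead one reads the lemma as comparing $X$ with $\rho$ times the \emph{$g$-unit} normal (which is arguably the useful form in later sections), then the proof is a two-line computation: write $\nu = \bar\nu + O(\rho^{-\tau})$ from the asymptotics lemma, multiply by $\rho$, and use $X = \rho\bar\nu$ to get $|X - \rho\nu| = \rho\cdot O(\rho^{-\tau}) = O(\rho^{1-\tau})$, and identically for $\vartheta$. Either way the argument is immediate.

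I do not expect any real obstacle here — this is correctly labelled a ``trivial lemma.'' The only thing to be careful about is bookkeeping: making sure that on $\partial\Sigma_\rho$ the vanishing of the first coordinate $x_1 = 0$ is used, so that $\rho\bar\vartheta$ really does equal $X$ and not merely its projection, and that the radius is measured with the Euclidean metric consistently with the definition of $\Sigma_\rho^+$ and $\Pi_\rho$ given in the Notations. I would present the proof as: (i) recall $\Sigma_\rho$ and $\partial\Sigma_\rho$ in coordinates; (ii) identify $\bar\nu$ and $\bar\vartheta$ explicitly as $X/\rho$; (iii) conclude $X - \rho\bar\nu = 0$ and $X - \rho\bar\vartheta = 0$, hence the claimed bounds. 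If the intended comparison is with the $g$-normals, append the one-line estimate using the asymptotics $\nu - \bar\nu = O(\rho^{-\tau})$, $\vartheta - \bar\vartheta = O(\rho^{-\tau})$.
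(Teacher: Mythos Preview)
Your analysis is correct and matches the paper's treatment: the paper explicitly labels this as one of ``two trivial lemmas'' and gives no proof, precisely because on the coordinate hemisphere $\Sigma_\rho = \{|x|=\rho\}$ one has $X = \rho\bar\nu$ identically (and likewise $X = \rho\bar\vartheta$ on $\partial\Sigma_\rho$), so the left-hand sides vanish. Your supplementary remark about the $g$-normals is also on target and is exactly how the lemma gets used in the proof of the Hawking-type mass theorem that follows, where $G(X,\nu)$ is first written as $G(\rho\bar\nu,\nu)$ and then $\bar\nu$ is replaced by $\nu$ at the cost of the $O(\rho^{-1-2\tau})$ error.
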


\begin{lemma}
  For large $\rho$, the volume $| \Sigma_{\rho} |$ satisfies
  \begin{equation}
    \left( \frac{2| \Sigma_{\rho} |}{\omega_{n - 1}} \right)^{\frac{1}{n - 1}}
    = \rho (1 + O (\rho^{- \tau})) .
  \end{equation}
\end{lemma}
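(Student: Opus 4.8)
The plan is to compute the induced area element on the coordinate hemisphere $\Sigma_\rho = \{x \in \operatorname{Int}M : |x| = \rho\}$ and compare it with the Euclidean one. Since $\Sigma_\rho$ is a standard Euclidean coordinate hemisphere of radius $\rho$, the Euclidean area is $|\Sigma_\rho|_{\bar g} = \tfrac12 \omega_{n-1}\rho^{n-1}$, because the full Euclidean sphere of radius $\rho$ has area $\omega_{n-1}\rho^{n-1}$ and the hemisphere is exactly half of it. So the content of the lemma is that the $g$-area and the $\bar g$-area of $\Sigma_\rho$ differ only by a multiplicative factor $1 + O(\rho^{-\tau})$, after which taking $(n-1)$-st roots gives the stated asymptotic.

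The key steps, in order, are: (i) invoke the asymptotics $\mathrm d\sigma = \mathrm d\bar\sigma + O(r^{-\tau})\,\mathrm d\bar\sigma$ from the asymptotics lemma already proved above, applied pointwise on $\Sigma_\rho$ where $r = \rho$; (ii) integrate to get $|\Sigma_\rho| = \int_{\Sigma_\rho}\mathrm d\sigma = \int_{\Sigma_\rho}(1 + O(\rho^{-\tau}))\,\mathrm d\bar\sigma = (1 + O(\rho^{-\tau}))\,|\Sigma_\rho|_{\bar g}$, using that the $O(\rho^{-\tau})$ bound is uniform on $\Sigma_\rho$; (iii) substitute $|\Sigma_\rho|_{\bar g} = \tfrac12\omega_{n-1}\rho^{n-1}$, so that $\tfrac{2|\Sigma_\rho|}{\omega_{n-1}} = \rho^{n-1}(1 + O(\rho^{-\tau}))$; (iv) take the $(n-1)$-st root and use $(1+t)^{1/(n-1)} = 1 + O(t)$ as $t \to 0$ to conclude $\left(\tfrac{2|\Sigma_\rho|}{\omega_{n-1}}\right)^{1/(n-1)} = \rho(1 + O(\rho^{-\tau}))$.

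There is essentially no main obstacle here — this is one of the two "trivial lemmas" the author flags — but the one point deserving a line of care is step (ii): one must know that the error term in $\mathrm d\sigma = \mathrm d\bar\sigma + O(r^{-\tau})\,\mathrm d\bar\sigma$ is uniform over $\Sigma_\rho$ and not merely pointwise, which follows from the definition of asymptotic flatness (the estimates $|g_{ij}-\delta_{ij}| = o(r^{-\tau})$ hold uniformly on the coordinate sphere $r = \rho$) together with the explicit formula for $\mathrm d\sigma$ in terms of the $g_{ij}$ restricted to $T\Sigma_\rho$. Once that uniformity is in hand, the remaining manipulations — integration and extracting a root — are elementary, and the decay exponent $\tau$ is simply inherited unchanged.
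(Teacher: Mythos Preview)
Your proposal is correct and is exactly the natural argument; the paper itself labels this as one of two ``trivial lemmas'' and gives no proof at all, so your write-up supplies precisely the routine verification the paper omits (area element asymptotics from the earlier lemma, integration, extraction of the $(n-1)$-st root).
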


\begin{theorem}
  \label{hawking type mass}(Hawking type mass) Let $\{\Sigma_{\rho} \}$ be a
  family of coordinate hemispheres with radius $\rho > 1$ in an asymptotically
  flat manifold $(M, g)$ of dimension $n \geqslant 3$. Let $\mu'$ be normal to
  $\partial \Sigma$ in $\Sigma$. Then
  \begin{equation}
    c_n \left( \frac{2| \Sigma |}{\omega_{n - 1}} \right)^{\frac{1}{n - 1}}
    \left[ \int_{\Sigma} (S - \frac{n - 2}{n - 1} H_{\Sigma, M}^2) \mathrm{d}
    \sigma + 2 \int_{\partial \Sigma} H_{\partial \Sigma, \Sigma} + \langle
    \vartheta, \mu' \rangle H_{\partial \Sigma, \partial M} \mathrm{d} \theta
    \right] \label{eq:hawking}
  \end{equation}
  converges to the ADM mass $m_{\ensuremath{\operatorname{ADM}}}$ as $\rho \to
  \infty$ where $S$ is the scalar curvature of $\Sigma$.
\end{theorem}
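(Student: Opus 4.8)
The plan is to connect the expression in \eqref{eq:hawking} to the tensorial formula for the ADM mass proved in Theorem \ref{mass by tensors}, namely
\[
m_{\operatorname{ADM}} = -c_n \lim_{\rho\to\infty}\Big[\int_{\Sigma_\rho^+} G(X,\nu)\,\mathrm d\sigma + \int_{\partial\Sigma_\rho^+}(A-Hg)(X,\vartheta)\,\mathrm d\theta\Big].
\]
First I would rewrite the interior integrand $G(X,\nu)=\operatorname{Rc}(X,\nu)-\tfrac12 R g(X,\nu)$ using the fact that on the coordinate hemisphere $\Sigma_\rho$ the position vector satisfies $X=\rho\bar\nu+O(\rho^{1-\tau})$, so that $G(X,\nu)=\rho\,G(\nu,\nu)+o(\rho^{-1-\tau})$ after accounting for $\mathrm d\sigma=\mathrm d\bar\sigma+O(\rho^{-\tau})\mathrm d\bar\sigma$ and $\nu=\bar\nu+O(\rho^{-\tau})$. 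Then I would invoke the Gauss equation for the hypersurface $\Sigma_\rho\subset M$: $2G(\nu,\nu)=S-(H_{\Sigma,M}^2-|\mathrm{II}_\Sigma|^2)$, and the standard fact that for coordinate hemispheres the second fundamental form is close to that of a Euclidean sphere, so $H_{\Sigma,M}^2-|\mathrm{II}_\Sigma|^2 = \tfrac{n-2}{n-1}H_{\Sigma,M}^2 + o(\rho^{-2})$ (since $\mathrm{II}_\Sigma\approx\tfrac{1}{\rho}g_\Sigma$ is nearly umbilic). This converts $\int_{\Sigma_\rho}G(X,\nu)\mathrm d\sigma$ into $\tfrac\rho2\int_{\Sigma_\rho}\big(S-\tfrac{n-2}{n-1}H_{\Sigma,M}^2\big)\mathrm d\sigma + o(\rho^{-\tau})$, matching the first interior term in \eqref{eq:hawking} up to the factor $\left(\tfrac{2|\Sigma|}{\omega_{n-1}}\right)^{1/(n-1)}=\rho(1+O(\rho^{-\tau}))$ supplied by the second trivial lemma.

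Next I would treat the boundary term $\int_{\partial\Sigma_\rho^+}(A-Hg)(X,\vartheta)\,\mathrm d\theta$. Here $\partial\Sigma_\rho^+=\partial\Sigma_\rho=\partial\Pi_\rho$, and $A$, $H$ are the second fundamental form and mean curvature of $\partial M$ in $M$. Using $X=\rho\bar\vartheta+O(\rho^{1-\tau})$ along $\partial\Sigma_\rho$, this becomes $\rho\big(A(\vartheta,\vartheta)-H\big)+o(\rho^{-1-\tau})$ integrated over $\partial\Sigma_\rho$. The point is to re-express $A(\vartheta,\vartheta)-H$ in terms of the quantities appearing in \eqref{eq:hawking}, namely $H_{\partial\Sigma,\Sigma}$ and $\langle\vartheta,\mu'\rangle H_{\partial\Sigma,\partial M}$. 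Since $\Sigma_\rho$ meets $\partial M$ orthogonally (the coordinate hemisphere is orthogonal to $\partial M=\{x_1=0\}$), we can decompose the trace defining $H$ (over the full $(n-1)$-plane $T\partial M$) as a trace over $T\partial\Sigma$ plus the $\vartheta$-$\vartheta$ term: $H = H_{\partial M,M}\big|_{T\partial\Sigma} + A(\vartheta,\vartheta)$ where the first piece equals $H_{\partial\Sigma,\partial M}$ up to the angle factor $\langle\vartheta,\mu'\rangle$ coming from comparing the two conormals. Together with the Gauss–Codazzi relation linking $A(\vartheta,\vartheta)$ on $\partial M$ to the geodesic curvature $H_{\partial\Sigma,\Sigma}$ of $\partial\Sigma$ inside $\Sigma$, this should produce exactly $-(A-Hg)(X,\vartheta)=2\rho\big(H_{\partial\Sigma,\Sigma}+\langle\vartheta,\mu'\rangle H_{\partial\Sigma,\partial M}\big)+o(\rho^{-1-\tau})$, matching the boundary term in \eqref{eq:hawking}.

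Combining the two pieces, the bracketed quantity in \eqref{eq:hawking} times $c_n\rho$ equals $-c_n\rho\big[\int_{\Sigma_\rho}G(X/\rho,\nu)\mathrm d\sigma + \int_{\partial\Sigma_\rho}(A-Hg)(X/\rho,\vartheta)\mathrm d\theta\big] + o(1)$, which by Theorem \ref{mass by tensors} tends to $m_{\operatorname{ADM}}$; the factor $\left(\tfrac{2|\Sigma|}{\omega_{n-1}}\right)^{1/(n-1)}$ absorbs the $\rho$ and the $O(\rho^{-\tau})$ error harmlessly since all curvature integrals are $O(1)$ or smaller after integrating the $O(\rho^{-2})$-size integrands over an area $O(\rho^{n-1})$ surface. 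The main obstacle I anticipate is the bookkeeping in the boundary term: correctly identifying the factor $\langle\vartheta,\mu'\rangle$ (which measures the angle between the conormal to $\partial\Sigma$ in $\Sigma$ and the conormal to $\partial\Sigma$ in $\partial M$, and tends to $1$ but must be tracked), and cleanly splitting the mean curvature $H$ of $\partial M$ into a tangential-to-$\partial\Sigma$ trace plus the normal term, using orthogonality of $\Sigma_\rho$ and $\partial M$ together with the asymptotic flatness bounds $A=O(\rho^{-1-\tau})$, $H=O(\rho^{-1-\tau})$ from the asymptotics lemma. The interior term, by contrast, is a routine adaptation of the closed-surface Hawking-mass computation of \cite{miao-quasi-local-2017} once the Gauss equation and near-umbilicity of coordinate hemispheres are in hand.
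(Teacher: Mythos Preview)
Your approach is essentially the paper's: invoke Theorem~\ref{mass by tensors}, convert $G(X,\nu)$ via the Gauss equation plus near-umbilicity of coordinate hemispheres, and rewrite $(A-Hg)(X,\vartheta)$ on $\partial\Sigma_\rho$ in terms of $H_{\partial\Sigma,\Sigma}$ and $H_{\partial\Sigma,\partial M}$. The paper does exactly this, so strategically you are on target.

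That said, a few slips in the boundary discussion should be fixed before the argument is complete. First, $\langle\vartheta,\mu'\rangle$ tends to $0$, not $1$: when $\Sigma$ meets $\partial M$ orthogonally the conormals $\vartheta\in T\partial M$ and $\mu'\in T\Sigma$ are perpendicular, and for coordinate hemispheres in metric $g$ one has $\langle\vartheta,\mu'\rangle=O(\rho^{-\tau})$. Second, coordinate hemispheres meet $\partial M$ orthogonally only in the Euclidean sense, so you cannot assume exact orthogonality; this is precisely why the $\langle\vartheta,\mu'\rangle H_{\partial\Sigma,\partial M}$ term survives. Third, no Gauss--Codazzi relation is needed for the boundary step; the paper's mechanism is simpler. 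Since $H-A(\vartheta,\vartheta)=\sum_i\langle\nabla_{e_i}\mu,e_i\rangle$ (trace over $T\partial\Sigma$) and $H_{\partial\Sigma,\Sigma}=\sum_i\langle\nabla_{e_i}\mu',e_i\rangle$, one writes $\mu'=a\vartheta+b\mu$ with $a=\langle\vartheta,\mu'\rangle=O(\rho^{-\tau})$, $b-1=O(\rho^{-\tau})$, and computes the difference $\sum_i\langle\nabla_{e_i}(\mu-\mu'),e_i\rangle=-a\,H_{\partial\Sigma,\partial M}+O(\rho^{-1-2\tau})$. This gives $(A-Hg)(X,\vartheta)=-\rho\bigl(H_{\partial\Sigma,\Sigma}+\langle\vartheta,\mu'\rangle H_{\partial\Sigma,\partial M}\bigr)+O(\rho^{-1-2\tau})$, with coefficient $\rho$, not $2\rho$; the factor $2$ in \eqref{eq:hawking} arises instead from the $\tfrac12$ in the Gauss equation for the interior term.
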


\begin{proof}
  By Gauss equation,
  \begin{align*}
    G (X, \nu) & = G (\rho \bar{\nu}, \nu)\\
    & = \rho G (\nu, \nu) + O (\rho^{- 1 - 2 \tau})\\
    & = \frac{1}{2} \rho (H^2_{\Sigma, M} - |A|^2 - S) + O (\rho^{- 1 - 2
    \tau})
  \end{align*}
  
  where $S$ is the scalar curvature of $\Sigma$.
  
  On $\partial \Pi$,
  \begin{eqnarray*}
    (A - H g) (X, \vartheta) & = & (A - H g) (\rho \bar{\vartheta},
    \vartheta)\\
    & = & \rho (A - H g) (\vartheta, \vartheta) + O (\rho^{- 1 - 2 \tau})\\
    & = & \rho A (\vartheta, \vartheta) - \rho H + O (\rho^{- 1 - 2 \tau})\\
    & = & - \sum_i \langle \nabla_{e_i} \mu, e_i \rangle + O (\rho^{- 1 - 2
    \tau}) .
  \end{eqnarray*}
  where $e_i$'s are orthonormal basis of $T \partial \Sigma$. Let $\mu'$ be
  the normal to $\partial \Sigma$ in $\Sigma$, moreover,
  \begin{eqnarray*}
    - H_{\partial \Sigma, \Sigma} - [\rho A (\vartheta, \vartheta) - \rho H] &
    = & \sum \langle \nabla_{e_i} (\mu - \mu'), e_i \rangle .
  \end{eqnarray*}
  We note that $\langle \mu, \mu' \rangle = 1 + O (\rho^{- \tau})$, this
  implies that $\mu' : = a \vartheta + b \mu$ with $a, b - 1 = O (\rho^{-
  \tau})$. Then
  \begin{eqnarray*}
    \sum_i \langle \nabla_{e_i} (\mu - \mu'), e_i \rangle & = & \sum a \langle
    \nabla_{e_i} \vartheta, e_i \rangle + (b - 1) \langle \nabla_{e_i} \mu,
    e_i \rangle\\
    & = & \sum \langle \vartheta, \mu' \rangle H_{\partial \Sigma, \partial
    M} + O (\rho^{- 1 - 2 \tau})\\
    & = & O (\rho^{- 1 - \tau}) .
  \end{eqnarray*}
  So
  \[ (A - H g) (X, \vartheta) = - \rho H_{\partial \Sigma, \Sigma} - \rho
     \langle \vartheta, \mu' \rangle H_{\partial \Sigma, \partial M} + O
     (\rho^{- 1 - 2 \tau}) . \]
  Then the theorem is easily obtained by integration and discarding small
  terms.
\end{proof}

\begin{remark}
  Take $n = 3$ and that $\Sigma$ meets $\partial M$ orthogonally. Using as
  well Gauss-Bonnet theorem for surfaces with boundary, we have that
  {\eqref{eq:hawking}} turns into
  \[ \frac{1}{8 \pi} (\frac{2| \Sigma |}{4 \pi})^{1 / 2} (4 \pi \chi (\Sigma)
     - \frac{1}{2} \int_{\Sigma} H_{\Sigma, M}^2 \mathrm{d} \sigma) \]
  which reduces to
  \[ (\frac{| \Sigma |}{8 \pi})^{1 / 2} (1 - \frac{1}{8 \pi} \int_{\Sigma}
     H_{\Sigma, M}^2 \mathrm{d} \sigma) \]
  when $\Sigma$ is a standard hemisphere and $\chi (\Sigma) \equiv 2 - 2 g - b
  = 1$.
\end{remark}

\section{Isoperimetric mass}\label{isoperimetric derivation}

{\bfseries{Notations}} We introduce two more notations used in this section.
\begin{eqnarray}
  &  & \sigma_{i j} = g_{i j} - \delta_{i j} \nonumber\\
  \langle \cdot, \cdot \rangle_e &  & \text{ inner product in Euclidean
  metric} . \nonumber
\end{eqnarray}
We adapt it to our setting that

\begin{theorem}
  $m_{\ensuremath{\operatorname{ISO}}} = m_{\ensuremath{\operatorname{ADM}}}$.
\end{theorem}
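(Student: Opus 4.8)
The plan is to show that the limsup defining $m_{\operatorname{ISO}}$ coincides with the ADM mass by expanding the area $\mathcal{A}(r)$ and the enclosed volume $V(r)$ of the coordinate hemisphere $S_{r,+}^2$ (together with the boundary portion $\Pi_r$) to sufficient order in $r$, then plugging these expansions into the formula \eqref{huisken isoperimetric mass with boundary}. This mirrors the strategy of Fan--Shi--Tam \cite{fan-large-sphere-2009} for the closed case, with the extra bookkeeping forced by the non-compact boundary $\partial M$ and the half-ball geometry. First I would record, using the notation $\sigma_{ij} = g_{ij} - \delta_{ij}$, the Euclidean-reference expansions
\[
\mathcal{A}(r) = \tfrac{1}{2}\omega_2 r^2 + \int_{S_{r,+}^2} \big(\tfrac{1}{2}\sigma_{ab}^{\top} + \text{l.o.t.}\big)\,\mathrm{d}\bar\sigma + o(1)\cdot r,
\]
and the corresponding volume expansion $V(r) = \tfrac{1}{2}\cdot\tfrac{4}{3}\pi r^3 + (\text{correction of order } r^{2-\tau} \text{ or better}) $, where the half-factors come from working in $\mathbb{R}_+^3$. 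The point is that the isoperimetric-type combination $V - \tfrac{\sqrt2}{6\sqrt\pi}\mathcal{A}^{3/2}/V$ is designed so that the leading round terms cancel, leaving a quantity of order $o(1)$ unless the metric carries mass; the coefficient of the surviving term must be matched against $b_n\{\int_{S_{r,+}^2}(g_{ij,j}-g_{jj,i})\nu^i + \int_{\partial S_{r,+}^2} g_{a1}\mu^a\}$ from \eqref{energy-def}.

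The key steps, in order, are: (1) compute the area element asymptotics $\mathrm{d}\sigma = \mathrm{d}\bar\sigma + \tfrac12(\operatorname{tr}_{S}\sigma)\mathrm{d}\bar\sigma + o(r^{-\tau})\mathrm{d}\bar\sigma$ restricted to $S_{r,+}^2$, being careful that the trace is over the tangential directions of the hemisphere, and likewise the induced volume form on $D_r^+$; (2) integrate these over the hemisphere and the enclosed region, converting radial derivatives of $\sigma$ into boundary integrals via the divergence theorem on $\mathbb{R}_+^3$, which is exactly where the $\Pi_r$-contribution $\int_{\partial S_{r,+}^2} g_{a1}\mu^a$ enters as a boundary-of-boundary term; (3) substitute into \eqref{huisken isoperimetric mass with boundary}, expand the algebraic combination $V - c\,\mathcal{A}^{3/2}/V$ to first nontrivial order using $(1+x)^{3/2} \approx 1 + \tfrac32 x$, and collect; (4) identify the resulting expression with $m_{\operatorname{ADM}}$ using the definition \eqref{energy-def}, or alternatively route through Theorem \ref{mass by tensors} by recognizing the volume/area corrections as integrals of $G(X,\nu)$ and $(A-Hg)(X,\vartheta)$ — the latter may in fact be the cleaner path, since \eqref{ADM by tensors} already packages the boundary term correctly. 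Finally one checks that passing to $\limsup$ is harmless because the error terms are genuine $o(1)$, so the $\limsup$ is an honest limit.

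The main obstacle I anticipate is step (2): getting the boundary contribution bookkeeping exactly right. In the closed case of Fan--Shi--Tam the Stokes-type manipulations have no boundary terms, but here every integration by parts on the half-ball $D_r^+$ produces a contribution on $\Pi_r \subset \partial M$, and a further one on $\partial\Pi_r = \partial S_{r,+}^2$. One must verify that these conspire to reproduce precisely the $\int_{\partial S_{r,+}^2} g_{a1}\mu^a\,\mathrm{d}\theta$ term in \eqref{energy-def} with the correct constant $b_n$, and that no spurious term of order $r^{-\tau}$ (relative to the leading scale) survives — this requires using $\mu = -\partial_1 + O(r^{-\tau})$ and the asymptotics for $A_{ab}, H$ from the Lemma, together with the orthogonality that the hemisphere meets $\partial M$ along $\partial S_{r,+}^2$. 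A secondary subtlety is making sure the half-ball normalizations (factors of $\tfrac12$ in area and volume, the $\sqrt2$ and $6\sqrt\pi$ in Definition \ref{isoperimetric boundary}) are internally consistent so that the round part cancels exactly; a useful sanity check is to run the computation on $(\mathbb{R}_+^3,\delta)$ itself and confirm $m_{\operatorname{ISO}} = 0$. Once the boundary terms are pinned down, the remainder is the same routine Taylor expansion as in the classical argument.
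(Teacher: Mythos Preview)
Your strategy is the same as the paper's --- adapt Fan--Shi--Tam to the half-space setting and track the extra boundary contributions --- and your identification of step~(2) as the crux is accurate. One point of comparison worth flagging: the paper does not expand $\mathcal{A}(r)$ and $V(r)$ separately and then plug in, as your outline suggests. Instead it differentiates $\mathcal{A}(r)$ in $r$ (this is what introduces the derivatives $\sigma_{ij,k}x^k/r$ that the mass integrand needs), applies the Euclidean first variation formula on the hemisphere $\Sigma_r^+$ to the tangential vector field $Y_j=\sigma_{kj}x^k/r$ --- and it is precisely this first-variation step, not a divergence theorem on the solid half-ball $D_r^+$, that produces the boundary term $\int_{\partial\Sigma_r^+} g_{1a}x^a/r\,\mathrm{d}\bar\theta$ --- and arrives at
\[
\mathcal{A}'(r)=2\pi r-4\pi m+\frac{\mathcal{A}(r)}{r}+\frac{1}{r}\int_{\Sigma_r^+}\frac{\sigma_{ij}x^ix^j}{r^2}\,\mathrm{d}\bar\sigma+o(1).
\]
It then uses the co-area formula for $V'(r)$ to eliminate the residual $\sigma$-integral and obtains the ODE $(r\mathcal{A}(r))'=2\pi r^2-4\pi m r+2V'(r)+o(r)$, whose integral gives $V(r)=\tfrac12 r\mathcal{A}(r)-\tfrac13\pi r^3+\pi m r^2+o(r^2)$; Step~3 is then pure algebra. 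Your direct-expansion route would need an analogous integration-by-parts device to make $m$ appear from $\int_{D_r^+}\sigma_{ii}$ and $\int_{\Sigma_r^+}h^{ij}\sigma_{ij}$, so the ODE packaging is really doing the work you attribute to ``divergence theorem on $\mathbb{R}_+^3$''; the alternative of routing through Theorem~\ref{mass by tensors} is not used here.
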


\begin{proof}
  {\bfseries{Step 1, derivative of $\mathcal{A}$.}}
  
  Recall that $\mathrm{d} \sigma = (1 + h^{i j} \sigma_{i j} + O (r^{- 2
  \tau}))^{\frac{1}{2}} d \bar{\sigma}$ (see
  {\cite[(2.7)]{fan-large-sphere-2009}}), then
  \begin{equation}
    \mathcal{A} (r) = 2 \pi r^2 + \frac{1}{2} \int_{\Sigma} h^{i j} \sigma_{i
    j} \mathrm{d} \bar{\sigma} + O (r^{2 - 2 \tau})
  \end{equation}
  and
  \begin{align}
    \mathcal{A}' (r) & = 4 \pi r + \frac{1}{2} \int_{\Sigma}
    \frac{\partial}{\partial r} (h^{i j} \sigma_{i j}) \mathrm{d} \bar{\sigma}
    + \frac{1}{r} \int_{\Sigma} h^{i j} \sigma_{i j} \mathrm{d} \bar{\sigma} +
    O (r^{1 - 2 \tau})\\
    & = 4 \pi r + \frac{1}{2} \int_{\Sigma} h^{i j} \sigma_{i j, k}
    \frac{x^k}{r} \mathrm{d} \bar{\sigma} + \frac{1}{r} \int_{\Sigma} h^{i j}
    \sigma_{i j} \mathrm{d} \bar{\sigma} + O (r^{1 - 2 \tau})\\
    & = 4 \pi r + \frac{1}{2} \int_{\Sigma} \frac{\sigma_{i i, k} x^k}{r}
    \mathrm{d} \bar{\sigma} - \frac{1}{2} \int_{\Sigma} \frac{\sigma_{i j, k}
    x^i x^j x^k}{r^3} \mathrm{d} \bar{\sigma} + \frac{1}{r} \int_{\Sigma} h^{i
    j} \sigma_{i j} \mathrm{d} \bar{\sigma} + O (r^{1 - 2 \tau}) . \label{area
    derivative}
  \end{align}
  Note that
  \begin{align}
    \int_{\Sigma} \frac{\sigma_{i j, k} x^i x^j x^k}{r^3} \mathrm{d}
    \bar{\sigma} & = \int_{\Sigma} \frac{\partial}{\partial x^k}
    (\frac{\sigma_{i j} x^j}{r}) \frac{x^i x^k}{r^2} \mathrm{d} \bar{\sigma}\\
    & = - \int_{\Sigma} (\delta_{i k} - \frac{x^i x^k}{r^2})
    \frac{\partial}{\partial x^k} (\frac{\sigma_{i j} x^j}{r}) \mathrm{d}
    \bar{\sigma} + \int_{\Sigma} \frac{\partial}{\partial x^i}
    (\frac{\sigma_{i j} x^j}{r}) \mathrm{d} \bar{\sigma}.
  \end{align}
  Since $\frac{x^i}{r} \partial_k$ is normal to $\Sigma_r$ under Euclidean
  metric, $\delta_{i k} - \frac{x^i x^k}{r^2}$ is the Euclidean metric
  projected to $S_r$ (or in other words, induced metric), hence letting $Y_j =
  \sigma_{k j} \frac{x^k}{r}$ and $\bar{e}_1 = - \frac{\partial}{\partial x^1}
  $ we see that by first variation formula under Euclidean metric,
  \begin{align}
    & \int_{\Sigma} (\delta_{i k} - \frac{x^i x^k}{r^2})
    \frac{\partial}{\partial x^k} (\frac{\sigma_{i j} x^j}{r}) \mathrm{d}
    \bar{\sigma}\\
    = & \int_{\Sigma} \ensuremath{\operatorname{div}}^e_{\Sigma} Y\\
    = & \int_{\Sigma} H_e  \langle Y, \bar{n} \rangle_e \mathrm{d}
    \bar{\sigma} + \int_{\partial \Sigma} \langle \bar{e}_1, Y \rangle_e
    \mathrm{d} \bar{\theta}\\
    = & \int_{\Sigma} \frac{2}{r} \sigma_{i j} \frac{x^i}{r}  \frac{x^j}{r}
    \mathrm{d} \bar{\sigma} - \int_{\partial \Sigma} \sigma_{1 k}
    \frac{x^k}{r} \mathrm{d} \bar{\theta}\\
    = & \int_{\Sigma} \frac{2 \sigma_{i j} x^i x^j}{r^3} \mathrm{d}
    \bar{\sigma} - \int_{\partial \Sigma} g_{1 a} \frac{x^a}{r} \mathrm{d}
    \bar{\theta} .
  \end{align}
  Hence
  \begin{align}
    & \int_{\Sigma} \frac{\sigma_{i j, k} x^i x^j x^k}{r^3} \mathrm{d}
    \bar{\sigma}\\
    = & - 2 \int_{\Sigma} \frac{\sigma_{i j} x^i x^j}{r^3} \mathrm{d}
    \bar{\sigma} - \int_{\partial \Sigma} g_{1 a} \frac{x^a}{r} \mathrm{d}
    \bar{\theta}\\
    & \hspace{1em} + \int_{\Sigma} \frac{\sigma_{i j, i} x^j}{r} \mathrm{d}
    \bar{\sigma} + \int_{\Sigma} \sigma_{i j} (\frac{\delta_{i j}}{r} -
    \frac{x^i x^j}{r^3}) \mathrm{d} \bar{\sigma}\\
    = & - 2 \int_{\Sigma} \frac{\sigma_{i j} x^i x^j}{r^3} \mathrm{d}
    \bar{\sigma} + \int_{\partial \Sigma} \sigma_{1 a} \frac{x^a}{r}
    \mathrm{d} \bar{\theta}\\
    & \hspace{1em} + \int_{\Sigma} \frac{\sigma_{i j, i} x^j}{r} \mathrm{d}
    \bar{\sigma} + \frac{1}{r} \int_{\Sigma} h^{i j} \sigma_{i j} \mathrm{d}
    \bar{\sigma} + O (r^{1 - 2 \tau}) .
  \end{align}
  Inserting this back to {\eqref{area derivative}}, we obtain
  \begin{align}
    \mathcal{A}' (r) & = 4 \pi r - \frac{1}{2} \left[ \int_{S_r}
    (\frac{\sigma_{i j, i} x^j}{r} - \frac{\sigma_{i i, k} x^k}{r}) \mathrm{d}
    \bar{\sigma} + \int_{\partial S_r} g_{1 a} \frac{x^a}{r} \mathrm{d}
    \bar{\theta} \right]\\
    & \hspace{5em} + \frac{1}{2 r} \int_{\Sigma} h^{i j} \sigma_{i j}
    \mathrm{d} \bar{\sigma} + \int_{\Sigma} \frac{\sigma_{i j} x^i x^j}{r^3}
    \mathrm{d} \bar{\sigma} + O (r^{1 - 2 \tau})\\
    & = 4 \pi r - 4 \pi m + \frac{1}{2 r} \int_{\Sigma} h^{i j} \sigma_{i j}
    \mathrm{d} \bar{\sigma} + \int_{\Sigma} \frac{\sigma_{i j} x^i x^j}{r^3}
    \mathrm{d} \bar{\sigma} + o (1)\\
    & = 2 \pi r - 4 \pi m + \frac{\mathcal{A} (r)}{r} + \frac{1}{r}
    \int_{\Sigma} \frac{\sigma_{i j} x^i x^j}{r^2} \mathrm{d} \bar{\sigma} + o
    (1), \label{area derivative by mass}
  \end{align}
  where in the last line we used {\eqref{area derivative}}.
  {\bfseries{Step 2, asymptotics of volume $V (r)$.}}
  
  Recall {\cite[(2.2)]{fan-large-sphere-2009}},
  \begin{equation}
    | \nabla r|^2 = 1 - \frac{\sigma_{i j} x^i x^j}{r^2} + O (r^{- 2 \tau}) .
  \end{equation}
  By co-area formula,
  \[ V' (r) = \int_{\Sigma} \frac{1}{| \nabla r|} d \sigma =\mathcal{A} (r) +
     \frac{1}{2} \int_{\Sigma} \frac{\sigma_{i j} x^i x^j}{r^2} d \sigma + O
     (r^{2 - 2 \tau}) . \]
  Again by $\mathrm{d} \sigma = (1 + h^{i j} \sigma_{i j} + O (r^{- 2
  \tau}))^{\frac{1}{2}} \mathrm{d} \bar{\sigma}$, we have that
  \begin{equation}
    V' (r) = \int_{\Sigma} \frac{1}{| \nabla r|} \mathrm{d} \sigma
    =\mathcal{A} (r) + \frac{1}{2} \int_{\Sigma} \frac{\sigma_{i j} x^i
    x^j}{r^2} \mathrm{d} \bar{\sigma} + O (r^{2 - 2 \tau}) .
  \end{equation}
  Note that the derivative $\mathcal{A}' (r)$ in {\eqref{area derivative by
  mass}}, we have that
  \[ \mathcal{A}' (r) = 2 \pi r - 4 \pi m + \frac{\mathcal{A} (r)}{r} +
     \frac{2}{r} (V' (r) -\mathcal{A}(r)) + o (1) \]
  which gives
  \[ (r\mathcal{A}(r))' = 2 \pi r^2 - 4 \pi m r + 2 V' (r) + o (r) . \]
  Integration then gives
  \begin{equation}
    V (r) = \frac{1}{2} r\mathcal{A} (r) - \frac{1}{3} \pi r^3 + \pi m r^2 + o
    (r^2) .
  \end{equation}
  {\bfseries{Step 3, evaluation of isoperimetric mass.}}
  \begin{align}
    & \frac{2}{\mathcal{A} (r)} (V (r) - \frac{\sqrt{2} \mathcal{A}^{3 / 2}
    (r)}{6 \sqrt{\pi}})\\
    = & \frac{2}{\mathcal{A} (r)} [\frac{1}{2} r\mathcal{A}(r) - \frac{1}{3}
    \pi r^3 + \pi m r^2] - \frac{\sqrt{2} \mathcal{A}^{\frac{1}{2}} (r)}{3
    \sqrt{\pi}} + o (1)\\
    = & r + \frac{2 \pi r^2}{\mathcal{A} (r)} (m - \frac{r}{3}) - \frac{2
    r}{3} (\frac{\mathcal{A}(r)}{2 \pi r^2})^{\frac{1}{2}} + o (1)\\
    = & r + (m - \frac{r}{3}) (1 - \Theta + O (r^{- 2 \tau})) - \frac{2 r}{3}
    (1 + \frac{1}{2} \Theta + O (r^{- 2 \tau})) + o (1)\\
    = & m + o (1),
  \end{align}
  where we have used that
  \[ \Theta := \frac{1}{4 \pi r^2} \int_{\Sigma} h^{i j} \sigma_{i j}
     \mathrm{d} \bar{\sigma} = O (r^{- \tau}) . \]
  so that
  \[ \mathcal{A} (r) = 2 \pi r^2 (1 + \Theta + O (r^{- 2 \tau})) . \]
\end{proof}

\begin{remark}
  Because we can also use arbitrary sets of finite perimeter $\Omega_i$ to
  define the isoperimetric mass, i.e.
  \begin{equation}
    \tilde{m}_{\ensuremath{\operatorname{ISO}}} = \limsup_{\Omega_i \to M}
    \frac{2}{| \partial^{\ast} \Omega_i \cap \partial M|} (\mathcal{H}^3
    (\Omega_i) - \frac{\sqrt{2} | \partial^{\ast} \Omega_i \cap \partial M|}{6
    \sqrt{\pi}}) \label{isoperimetric mass 2}
  \end{equation}
  where $\partial^{\ast} \Omega$ is the reduced boundary of $\Omega_i$.We see
  that $\tilde{m}_{\ensuremath{\operatorname{ISO}}} \geqslant
  m_{\ensuremath{\operatorname{ISO}}} = m_{\ensuremath{\operatorname{ADM}}}$.
\end{remark}

\end{document}